\providecommand{\U}[1]{\protect\rule{.1in}{.1in}}
\newtheorem{thm}{Theorem}[section]
\newtheorem{cor}[thm]{Corollary}
\newtheorem{ex}[thm]{Example}
\newtheorem{nota}[thm]{Notation}
\newtheorem{prop}[thm]{Proposition}
\theoremstyle{definition}
\newtheorem{df}[thm]{Definition}
\newtheorem{rem}[thm]{Remark}
\numberwithin{equation}{section}
\begin{document}

\title[Brownian and energy representations of path groups]{Equivalence of the Brownian and energy representations}

\author[Albeverio]{Sergio Albeverio{$^{\dag}$}}
\thanks{\footnotemark {$^{\dag }$} Research was supported in part by CIB, EPFL and HCM, University of Bonn.}
\address{$^{\dag }$
Institut f\"{u}r Angewandte Mathematik \\
Abteilung Wahrscheinlichkeitstheorie und HCM \\
Rheinische Friedrich-Wilhelms-Universit\"{a}t Bonn \\
Endenicher Allee 60 \\
53115 Bonn, Germany
}
\email{albeverio@iam.uni-bonn.de}

\author[Driver]{Bruce K. Driver{$^{\ast }$}}
\thanks{\footnotemark {$^\ast$}This research was supported in part by NSF Grant DMS-0739164.}
\address{$^\ast$ Department of Mathematics, 0112\\
University of California, San Diego \\
La Jolla, CA 92093-0112 }
\email{bdriver{@}ucsd.edu}

\author[Gordina]{Maria Gordina{$^{\dag \dag}$}}
\thanks{\footnotemark {$\dag \dag$} Research was supported in part by NSF Grant DMS-1007496.}
\address{$^{\dag \dag}$ Department of Mathematics\\
University of Connecticut\\
Storrs, CT 06269,  U.S.A.}
\email{maria.gordina@math.uconn.edu}

\author[Vershik]{A. M. Vershik{$^{\ddag }$}}
\thanks{\footnotemark {$\ddag $} Research was supported in part by RFFR Grant 14-01-00373.}
\address{$^{\ddag }$
St.Petersburg Department of Steklov Institute of Mathematics \\
Russian Academy of Sciences, \\
Mathematics and Mechanics Department \\
St.Petersburg State University \\
Institue of the Probelm of Transmission of Information \\
Russia }
\email{vershik@pdmi.ras.ru}

\keywords{Quasi-invariance; stochastic differential equations; Lie groups; representations of infinite-dimensional groups.}

\subjclass{Primary 22E65 58G32; Secondary 22E30 22E45 46E50 60H05}

%\date{\today \ \emph{File:\jobname{.tex}}}

\begin{abstract}
We consider two unitary representations of the infinite-dimensional groups of smooth paths with values in a compact Lie group. The first representation is induced by quasi-invariance of the Wiener measure, and the second representation is the energy representation. We define these representations and their basic properties, and then we prove that these representations are unitarily equivalent.
\end{abstract}

\maketitle

\tableofcontents

\renewcommand{\contentsname}{Table of Contents}

\section{Introduction}\label{s.1}

The main subject of this paper is a study of two unitary representations of the group $H\left( G \right)$ of smooth paths in a compact Lie group $G$. The first representation is on the Hilbert space $L^{2}\left( W\left( G \right), \mu \right)$, where $W\left( G \right)$ is the Wiener space of continuous path in $G$ and $\mu$ is the corresponding Wiener measure. This representation is induced by the quasi-invariance of the Wiener measure $\mu$ with respect to the left (right) multiplication on $W\left( G \right)$ by elements in $H\left( G \right)$. The necessary preliminaries from stochastic analysis are introduced in Section \ref{s.2}. We define the corresponding Brownian representations in Section \ref{s.4}. One of the questions mentioned in the previous works such as \cite{AlbeverioHKTestardVershik1983a} is whether the constant function $\mathbbm{1}$ is the cyclic vector for these representations. This is what we prove in Section \ref{s3}.

Another representation of the the group $H\left( G \right)$ is the energy representation. The representation space in this case is $L^{2}\left( W\left( \mathfrak{g} \right), \nu \right)$, where $\mathfrak{g}$ is the Lie algebra of $G$, and $\nu$ is the standard Gaussian measure on $W\left( \mathfrak{g}\right)$. Our main result in Section \ref{s.energy} is the (unitary) equivalence of the Brownian and energy representations.

These representations have been studied previously in a number of articles including \cite{AHKMTTBook, AlbeverioHK1978, AlbeverioHKTestardVershik1983a, AlbeverioHKTestardVershik1983b, DriverHall1999a, GGV1977, GGV1980-81, Testard1987, Wallach1987}. We will not attempt to give a comprehensive review of the mathematical literature on the subject, but rather explain the choice of this particular topic for this volume.

\subsection*{Acknowledgement} Even though M.I. had no publications in this field, the combination of representation theory, stochastic analysis and von Neumann algebras appealed to him. Moreover, he introduced MG to the latter subject which resulted in \cite{Gordina2002}.

\section{Notation}

\label{s.2}

Let $G$ be a compact connected Lie group, $e\in G$ denote the identity of $G,$
$\mathfrak{g}$ be its Lie algebra, and $d=\dim_{\mathbb{R}}\mathfrak{g}$ be
the dimension of $G$ and $\mathfrak{g}$. Without loss of generality we may and
do assume that $G$ is a Lie subgroup of $\mathrm{GL}_{n}(\mathbb{R})$. By
identifying $G$ with a matrix group, we are able to minimize the differential
geometric notation required of the reader. We assume that the Lie algebra
$\mathfrak{g}$ of $G$ is identified with the tangent space at $e$, and
$\mathfrak{g}$ is equipped with an $\operatorname{Ad}_{G}$-invariant inner
product $\langle\cdot,\cdot\rangle$, which we could take to be the negative of
the Killing form if $\mathfrak{g}$ is semi-simple. Associated to the
$\operatorname{Ad}_{G}$-invariant inner product is the Laplace operator
described below.

\subsection{Heat kernels}

This section reviews some basic facts about heat kernels on unimodular Lie groups. Let $dx$ denote a bi-invariant Haar measure on $G$ which is unique up to normalization. For $A\in\mathfrak{g}$, let $\widetilde{A} (\widehat{A})$
denote the unique left (right) invariant vector field on $G$ which agrees with $A$ at $e\in G$. Let $\mathfrak{g}_{0}\subset\mathfrak{g}$ be an orthonormal basis for $\mathfrak{g}$. The left and right invariant Laplacian is then given $\Delta:=\sum_{A\in\mathfrak{g}_{0}}\widetilde{A}^{2}$ and $\Delta^{\prime}
:=\sum_{A\in\mathfrak{g}_{0}}\widehat{A}^{2}$ respectively. Since $G$ is unimodular, it is easy to check the formal adjoint, relative to $\mathrm{L}^{2}\left(  G, dx \right)  $, of $\widetilde{A}$ $(\widehat{A})$ is $-\widetilde{A}$ $(-\widehat{A})$. Hence, $\Delta/2$ and $\Delta^{\prime}/2$ are symmetric operators on the smooth functions with compact support on $G$. It is well known, see for example Robinson \cite[Theorem 2.1, p. 152]{RobinsonBook1991}, that $\Delta/2$ and $\Delta^{\prime}/2$ are essentially self-adjoint and the closures of $\Delta/2$ and $\Delta^{\prime}/2$ generate strongly continuous, self-adjoint contraction semigroups
$e^{t\Delta/2}$ and $e^{t\Delta^{\prime}/2}$ on $\mathrm{L}^{2}\left(  G, dx \right)$. Let $p_{t}=e^{t\Delta/2}\delta_{e}$, $t \geqslant 0$, be the fundamental solution, i.e.

\begin{equation}
\partial p_{t}/\partial t=\frac{1}{2}\Delta p_{t}\mathrm{\ with}\text{ }
\lim_{t\rightarrow0}p_{t}=\delta_{e}. \label{e.k2.1}%
\end{equation}
For a proof of the following theorem see Robinson \cite[Theorem 2.1,
p.257]{RobinsonBook1991}.

\begin{thm}
\ \label{t.k2.7} Assuming the above notation, let $p_{t}$ denote the
fundamental solution to the left heat equation \text{(\ref{e.k2.1})}.
Then\ $p_{t}(x)=p_{t}(x^{-1})$ for all $x\in G$ and
\[
e^{t\Delta/2}f(x)=\int_{G}p_{t}(x^{-1}h)f(h)dh=\int_{G}p_{t}(h^{-1}x)f(h)dh.
\]

\end{thm}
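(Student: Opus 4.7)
The plan is to reduce the theorem to two ingredients: left-invariance of $\Delta$ (which forces $e^{t\Delta/2}$ to be a left-convolution operator with a single function $p_t$) and self-adjointness of $\Delta$ on $L^2(G,dx)$ (which forces the inversion symmetry $p_t(x)=p_t(x^{-1})$). The second integral formula then falls out as a trivial consequence of the inversion symmetry.

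First I would establish the convolution form $e^{t\Delta/2}f(x)=\int_G p_t(x^{-1}h)f(h)\,dh$. Since each $\widetilde{A}$ commutes with left translation $L_g f(x):=f(g^{-1}x)$, so does $\Delta$, and by the spectral theorem (or a resolvent argument) so does $e^{t\Delta/2}$. Writing $e^{t\Delta/2}f$ via its Schwartz kernel $K_t(x,y)$, the identity $e^{t\Delta/2}L_g = L_g e^{t\Delta/2}$ together with right-invariance of Haar measure forces $K_t(x,y)=\phi_t(x^{-1}y)$ for some function $\phi_t\in C^\infty(G)$ (smoothness is standard on compact $G$ from ellipticity of $\Delta$ and the discrete spectrum of its closure). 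Applying both sides at $x=e$ against a smooth test $f$, and using strong continuity of the semigroup as $t\to 0^+$ plus the heat equation $\partial_t K_t = \tfrac12\Delta K_t$ in the $x$-variable, identifies $\phi_t$ with the fundamental solution $p_t$ of (\ref{e.k2.1}).

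Next I would deduce the inversion symmetry $p_t(x)=p_t(x^{-1})$. Because $\widetilde{A}^*=-\widetilde{A}$ on the unimodular group $G$, the operator $\Delta/2$ is formally self-adjoint on $C_c^\infty(G)$ and its closure is self-adjoint, so $e^{t\Delta/2}$ is a self-adjoint contraction on $L^2(G,dx)$. Inserting the convolution form into $\langle e^{t\Delta/2}f,g\rangle = \langle f, e^{t\Delta/2}g\rangle$ and using real-valuedness of $p_t$ (the heat equation with real coefficients preserves reality, and $\delta_e$ is real) yields $p_t(x^{-1}h)=p_t(h^{-1}x)$ for (dense, hence all) $x,h\in G$; the substitution $y=x^{-1}h$ delivers $p_t(y)=p_t(y^{-1})$. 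The second integral formula is then immediate by replacing $p_t(x^{-1}h)$ by $p_t(h^{-1}x)$.

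The step most likely to cause technical friction is the identification of $\phi_t$ with the fundamental solution $p_t = e^{t\Delta/2}\delta_e$: one must make sense of the initial distribution $\delta_e$ and justify passing between the distributional initial condition and the strong $L^2$-continuity of the semigroup. On a compact Lie group this is routine (Sobolev embedding places $p_t$ in $C^\infty(G)$ for $t>0$, and approximation of $\delta_e$ by mollifiers together with continuity of $e^{t\Delta/2}:L^2\to L^2$ closes the argument), and is exactly what the cited Robinson reference handles in the abstract Lie group setting.
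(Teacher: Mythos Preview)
Your proposal is correct and follows the standard route: left-invariance of $\Delta$ gives the convolution kernel form, and self-adjointness of $e^{t\Delta/2}$ (from $\widetilde{A}^*=-\widetilde{A}$ on a unimodular group) forces $p_t(x)=p_t(x^{-1})$. The paper itself does not supply a proof of this theorem; it simply defers to Robinson \cite[Theorem 2.1, p.~257]{RobinsonBook1991}, so there is nothing substantive to compare against beyond noting that your argument is exactly the kind of proof one finds in that reference. Your identification of the one genuinely technical point---making rigorous the distributional initial condition $p_t\to\delta_e$ and matching $\phi_t$ with $p_t$---is accurate, and on a compact group this is indeed handled by ellipticity and Sobolev embedding as you say.
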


\begin{ex}
\label{ex.flat} In the case we take $G$ to be $\mathfrak{g}$ thought of as a
Lie group with its additive structure, we recover the standard convolution
heat kernel relative to the Lebesgue measure given by
\[
p_{t}\left(  x\right)  =\left(  \frac{1}{2\pi t}\right)^{d/2}\exp\left(
-\frac{1}{2t}\left\vert x\right\vert_{\mathfrak{g}}^{2}\right)  .
\]

\end{ex}

\subsection{Wiener Measures}

The reader is referred to \cite[p. 502]{Shigekawa1984a}, \cite[Theorem
1.4]{MalliavinMalliavin1990a}, \cite{Driver1992b, Driver1994b} and perhaps
also in \cite{Driver1995a} for more details on the summary presented here.

\begin{nota}
\label{n.2.1}Suppose $0<T<\infty$. Let us introduce the Wiener and
Cameron-Martin (finite energy) spaces, and the corresponding probability measures.

\begin{enumerate}
\item \textbf{Wiener space} will refer to the continuous path space
\[
W(G)=W([0,T],G)=\{\gamma\in C([0,T],G):\gamma_{0}=e\},
\]
where we equip $W\left(  G\right)  $ with the uniform metric
\[
d_{\infty}\left(  \alpha,\beta\right)  :=\max_{t\in\left[  0,T\right]
}d\left(  \alpha_{t},\beta_{t}\right)  .
\]
Here $d$ is the left invariant metric on $G$ associated to the left invariant
Riemannian metric on $G$ induced from the $\operatorname{Ad}_{G}$--invariant
inner product $\langle\cdot,\cdot\rangle$ on $\mathfrak{g}$. [In fact, these
metrics are bi-invariant, i.e. both left and right invariant.] Let
$g_{t}:W\left(  G\right)  \rightarrow G$ (for $0\leqslant t\leqslant T)$ be
the projection maps defined by
\[
g_{t}\left(  \gamma\right)  :=\gamma_{t},\text{ for all }\gamma\in W\left(
G\right)  .
\]
We further make $W(G)$ into a group using pointwise multiplication by $\left(
hk\right)  _{t}:=h_{t}k_{t}$ for all $h,k\in W\left(  G\right)  )$ and
$\Theta:W\left(  G\right)  \rightarrow W\left(  G\right)  $ be the group
inversion defined by
\[
\Theta\left(  \gamma\right)  =\gamma^{-1}\text{ for all }\gamma\in W\left(
G\right)  .
\]

\item Given $h\in W\left(  G\right) $, let
\[
\Vert h\Vert_{H,T}^{2}=\left\{
\begin{array}
[c]{ll}%
\infty, & \text{if $h$ is not absolutely continuous,}\\
\int_{0}^{T}|h(s)^{-1}h^{\prime}(s)|^{2}ds, & \text{if $h$ is absolutely
continuous}.
\end{array}
\right.
\]
Here $|\cdot|$ is the norm induced by the inner product $\langle\cdot,
\cdot\rangle$ on the Lie algebra $\mathfrak{g}$.

\item The \textbf{Cameron-Martin} (\textbf{finite energy}) subgroup, $H\left(
G\right)  \subset W\left(  G\right) $, is defined by
\[
H(G)=\left\{  h\in W(G):\Vert h\Vert_{H,T}<\infty\right\} .
\]

\item The corresponding spaces of paths with values in the Lie algebra
$\mathfrak{g}$ and starting at $0$ are denoted by $W\left(  \mathfrak{g}
\right) $, and $H\left(  \mathfrak{g}\right) $, and the Wiener measure on
$W\left(  \mathfrak{g}\right) $ is denoted by $\nu$.
\end{enumerate}
\end{nota}

\begin{thm}
[Wiener measures]\label{t.2.3} Let $\mathcal{B}$ be the Borel $\sigma
$--algebra on $W\left(  G\right)  $. There is a probability measure $\mu$ on
$(W(G),\mathcal{B})$ uniquely determined by specifying its finite dimensional
distributions as follows. For all $k\in\mathbb{N}$, partitions $0=s_{0}%
<s_{1}<s_{2}<\ldots<s_{k-1}<s_{k}=T$ of $\left[  0, T\right]  $, and for all
bounded measurable functions $f:G^{k}\rightarrow\mathbb{R}$%

\begin{equation}
\mu(f(g_{s_{1}},\ldots,g_{s_{k}}))=\int_{G^{k}}f(x_{1},\ldots,x_{k}
)\prod_{i=1}^{k}p_{\Delta s_{i}}(x_{i-1}^{-1}x_{i})dx_{1}\cdots dx_{k},
\label{e.2.5}%
\end{equation}
where $x_{0}:=e$, $\Delta s_{i}\equiv s_{i}-s_{i-1}$, $p_{t}(x)$ is the
convolution heat kernel described in Theorem \ref{t.k2.7}.

The process, $\{g_{t}\}_{0\leqslant t\leqslant T}$, is a $G$--valued Brownian
motion with respect to the filtered probability space $(W(G),\{\mathcal{B}%
_{t}\}, \mathcal{B}, \mu)$. In more detail, $\{g_{t}\}_{0\leqslant t\leqslant
T}$ is a diffusion process on $G$ with generator $\frac{1}{2}\Delta$ such that
$g_{0}=e$ a.s. As usual, this process has the following martingale property:
for all $f\in(C^{\infty}(G))$ the process
\begin{equation}
M_{t}^{f}:=f(g_{t})-f(g_{0})-\frac{1}{2}\int_{0}^{t}\Delta f(g_{\tau}%
)d\tau\label{e.2.7}%
\end{equation}
is a local martingale. In differential form this can be written as
\begin{equation}
\label{e.2.4}df\left(  g\right)  \overset{m}{=}\frac{1}{2}\left(  \Delta
f\right)  \left(  g\right)  dt,
\end{equation}
where $da\overset{m}{=}db$ if $a-b$ is a local martingale.
\end{thm}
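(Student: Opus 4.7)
The plan is to prove Theorem \ref{t.2.3} in four stages: (i) construct the measure on the cylinder $\sigma$-algebra of $G^{[0,T]}$ via Kolmogorov's extension theorem, (ii) check that it is supported on continuous paths, (iii) establish the Markov property, and (iv) deduce the martingale formula.

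For (i), I would first verify that the prescription \eqref{e.2.5} defines a consistent family of finite-dimensional distributions on $G^{[0,T]}$. The only nontrivial consistency check is marginalizing out an interior time $s_{i}$, which reduces to the semigroup (Chapman--Kolmogorov) identity
\[
\int_G p_{s}(x^{-1}y)\, p_{t}(y^{-1}z)\, dy = p_{s+t}(x^{-1}z),
\]
and this follows from Theorem \ref{t.k2.7} together with the fact that $e^{(s+t)\Delta/2} = e^{s\Delta/2} e^{t\Delta/2}$ applied to $\delta_e$. Bi-invariance of $dx$ (unimodularity) handles the inversion symmetry. Kolmogorov's extension theorem, using compactness of $G$ to give inner regularity of the finite-dimensional marginals, then produces a probability measure on $(G^{[0,T]},\mathcal{F}_{\mathrm{cyl}})$.

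For (ii), I would apply Kolmogorov's continuity criterion. The required moment estimate is that for some $p>2$ there is $C>0$ with
\[
\int_{G} d(e,x)^{p}\, p_{t}(x)\, dx \leqslant C\, t^{p/2}
\]
for $t$ in a bounded interval. On a compact Lie group this is standard short-time heat kernel behavior (for instance via the Gaussian upper bound for $p_{t}$ in the sub-Riemannian/Riemannian regime, or by coupling with the Lie algebra heat kernel via the exponential map). Together with left-invariance of $d$ and the Markov property of the increments, this upgrades to $\mathbb{E}_\mu[d(g_s,g_t)^p] \leqslant C|t-s|^{p/2}$, and Kolmogorov's criterion gives a continuous modification; since $g_0 = e$ a.s., the measure then pushes forward to a Borel probability measure $\mu$ on $W(G)$. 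Uniqueness is immediate: the cylinder sets generate $\mathcal{B}$ and the finite-dimensional distributions determine $\mu$ on a $\pi$-system.

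For (iii) and (iv), the factorized form of \eqref{e.2.5} immediately gives the Markov property: for $f\in C^\infty(G)$ and $s\leqslant t$,
\[
\mathbb{E}_\mu\bigl[f(g_t)\,\big|\,\mathcal{B}_s\bigr] = (P_{t-s}f)(g_s),\qquad P_r := e^{r\Delta/2}.
\]
Since $P_r f - f = \int_0^r \tfrac12\Delta P_\tau f\, d\sigma$ for $f\in C^\infty(G)$ (this uses that $\Delta/2$ generates $\{P_r\}$ on $L^2(G,dx)$, hence on $C(G)$ by Theorem \ref{t.k2.7}), applying the Markov property at time $s$ twice gives
\[
\mathbb{E}_\mu\bigl[f(g_t) - \tfrac12{\textstyle\int_0^t}\Delta f(g_\tau)\,d\tau\,\big|\,\mathcal{B}_s\bigr] = f(g_s) - \tfrac12{\textstyle\int_0^s}\Delta f(g_\tau)\,d\tau,
\]
which is exactly the martingale property \eqref{e.2.7} (and a true martingale here, not merely local, since $G$ is compact and $\Delta f$ is bounded). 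This in turn yields the differential form \eqref{e.2.4} and identifies $\{g_t\}$ as a diffusion with generator $\tfrac12\Delta$.

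I expect the main obstacle to be step (ii): the short-time heat-kernel moment estimate is the only analytic input that is not purely formal from \eqref{e.2.5}. Everything else is bookkeeping from Kolmogorov extension and the semigroup identity of Theorem \ref{t.k2.7}. In a write-up for this paper I would likely just cite standard heat kernel bounds on compact Lie groups (e.g.\ the references in Robinson \cite{RobinsonBook1991}) rather than reprove them.
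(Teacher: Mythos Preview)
Your proposal is correct, and for the one part the paper actually proves --- the martingale property \eqref{e.2.7} --- your step (iv) is essentially identical to the paper's argument: use the Markov property to write $\mathbb{E}_\mu[f(g_t)\mid\mathcal{B}_s]=(e^{(t-s)\Delta/2}f)(g_s)$, differentiate in $t$ to bring down $\tfrac12\Delta$, and integrate back. The paper does not prove existence of $\mu$, path continuity, or the Markov property at all; it simply invokes the references listed just before the theorem (Shigekawa, Malliavin--Malliavin, Driver) for the construction and cites Stroock--Varadhan for the martingale-problem framework. Your steps (i)--(iii) therefore supply strictly more than the paper does, and what you flag as the ``main obstacle'' (the short-time heat-kernel moment bound for Kolmogorov continuity) is exactly the kind of input the paper is content to outsource to the literature. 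If you are writing in the spirit of the paper, you could compress (i)--(iii) to a citation and keep only the computation in (iv).
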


\begin{proof}
Equation (\ref{e.2.7}) is well known from the theory of Markov processes, see
\cite{StroockVaradhanBook2006}. Indeed, using the Markovian property of $\mu$
one computes for $s>t$, $F$ a bounded $\mathcal{B}_{t}$-measurable function,
and $f\in C^{\infty}\left(  G\right)  $%
\begin{align*}
\frac{d}{ds}\mu(f(g_{s})F)  &  =\frac{d}{ds}\mu((e^{\frac{s-t}{2}\Delta
}f)(g_{t})F)\\
&  =\frac{1}{2}\mu(e^{\frac{s-t}{2}\Delta}\Delta f)(g_{t})F)=\mu(\frac{1}%
{2}\Delta f(g_{s})F).
\end{align*}
Integrating the last expression from $t$ to $s$ shows that
\[
\mu([M_{t}^{f}-M_{s}^{f}]F)=\mu\left(  \left\{  f(g_{t})-f(g_{s})-\int_{s}%
^{t}\frac{1}{2}\Delta f(g_{\tau})d\tau\right\}  F\right)  =0,
\]
which shows that $M^{f}$ is a martingale.
\end{proof}

\begin{rem}
\label{r.2.5} Note that the martingale property \eqref{e.2.5} can be extended
to vector-valued function. In particular, this applies to $G$-valued functions
since $G$ is assumed to be a matrix-valued Lie group.
\end{rem}

\subsection{Left and right Brownian motions}

\begin{thm}
[Quadratic variations]\label{t.qv} If $u$ and $v$ are smooth functions on $G$ then

\[
d\left[  u\left(  g_{t}\right)  \right]  \cdot d\left[  v\left(  g_{t}\right)
\right]  =dM_{t}^{u}dM_{t}^{v}=\left(  \nabla u\left(  g_{t}\right)
\cdot\nabla v\left(  g_{t}\right)  \right)  dt=\sum_{A\in\mathfrak{g}_{0}%
}\left(  \widetilde{A}u\right)  \left(  g_{t}\right)  \widetilde{A}v\left(
g_{t}\right)  dt.
\]
In particular,
\[
dg_{t}\otimes dg_{t}=g_{t}\otimes g_{t}Cdt,
\]
where $C:=\sum_{A\in\mathfrak{g}_{0}}A\otimes A$.
\end{thm}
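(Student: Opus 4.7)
The strategy is to combine the martingale identity $du(g_t) \overset{m}{=} \tfrac{1}{2}\Delta u(g_t)\,dt$ from \eqref{e.2.4} with the Itô product rule applied to the pair $u(g_t), v(g_t)$. Applied to $f = u$, $f = v$, and $f = uv$ separately, \eqref{e.2.4} gives three distinct drift identifications; comparing them after expanding $\Delta(uv)$ via the Leibniz rule extracts the quadratic covariation. The first equality $d[u(g_t)]\cdot d[v(g_t)] = dM_t^u\,dM_t^v$ is immediate because $u(g_t) - M_t^u = u(g_0) + \tfrac{1}{2}\int_0^t \Delta u(g_\tau)\,d\tau$ is of bounded variation and therefore contributes nothing to the quadratic variation.

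For the Laplacian we record the pointwise identity $\widetilde{A}^2(uv) = (\widetilde{A}^2 u)v + 2(\widetilde{A}u)(\widetilde{A}v) + u(\widetilde{A}^2 v)$, which upon summation over the orthonormal basis $\mathfrak{g}_0$ gives
\begin{equation*}
\Delta(uv) = u\,\Delta v + v\,\Delta u + 2\sum_{A\in \mathfrak{g}_0}(\widetilde{A}u)(\widetilde{A}v).
\end{equation*}
On the other hand, since $G$ is embedded in $\mathrm{GL}_n(\mathbb{R})$ and $u,v$ extend to smooth functions in a neighborhood, $u(g_t)$ and $v(g_t)$ are classical real-valued semimartingales, and the Itô product rule reads
\begin{equation*}
d\bigl[u(g_t)v(g_t)\bigr] = u(g_t)\,dv(g_t) + v(g_t)\,du(g_t) + d[u(g_\cdot), v(g_\cdot)]_t.
\end{equation*}
Substituting the three martingale identities into both sides, equating bounded-variation parts (killing the martingale pieces via $\overset{m}{=}$), and dividing by $2$ yields $d[u(g_\cdot), v(g_\cdot)]_t = \sum_{A\in\mathfrak{g}_0}(\widetilde{A}u)(g_t)(\widetilde{A}v)(g_t)\,dt$, which is the claimed formula.

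For the tensor identity I would apply the main formula to the matrix coordinate functions $u(x) = x_{ij}$, $v(x) = x_{kl}$. Because $\widetilde{A}$ differentiates along $x \mapsto x e^{sA}$, one has $(\widetilde{A}u)(x) = (xA)_{ij}$ and likewise for $v$, whence
\begin{equation*}
\sum_{A\in \mathfrak{g}_0}(\widetilde{A}u)(g_t)(\widetilde{A}v)(g_t) = \sum_{A\in\mathfrak{g}_0}(g_t A)_{ij}(g_t A)_{kl} = \bigl((g_t\otimes g_t)\,C\bigr)_{(ik),(jl)},
\end{equation*}
the last step being a pure index repackaging using $C = \sum_{A} A\otimes A$. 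Collecting all $(i,j),(k,l)$ entries gives $dg_t \otimes dg_t = (g_t\otimes g_t)\,C\,dt$.

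The only real subtlety is justifying an Itô product rule for functions of a manifold-valued process, but the matrix-group assumption lets us view $g_t$ as an $\mathrm{M}_n(\mathbb{R})$-valued semimartingale and appeal directly to classical Itô calculus; all the remaining work is algebraic bookkeeping with the Leibniz rule for $\Delta$ and the basis sum $C$.
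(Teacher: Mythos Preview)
Your proposal is correct and follows essentially the same route as the paper: apply \eqref{e.2.4} to $u$, $v$, and $uv$, expand $\Delta(uv)$ by the Leibniz rule, and compare with the It\^{o} product formula to isolate the cross-variation. The only cosmetic difference is in the second part: the paper repeats the comparison argument directly for vector-valued $u,v$ (tensor products replacing scalar products) and then specializes to $u(g)=v(g)=g$, whereas you apply the scalar result to the coordinate functions $x\mapsto x_{ij}$ and reassemble; the two computations are the same up to index bookkeeping.
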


\begin{proof}
On one hand,%
\[
d\left[  uv(g)\right]  \overset{m}{=}\frac{1}{2}\Delta\left(  uv\right)
\left(  g\right)  dt=\frac{1}{2}\left(  \Delta uv+u\Delta v+2\nabla
u\cdot\nabla v\right)  (g_{t})dt
\]
while on the other by It\^{o}'s formula,%
\begin{align*}
d\left[  u\left(  g\right)  v\left(  g\right)  \right]  = &  du\left(
g\right)  \cdot v\left(  g\right)  +u\left(  g\right)  \cdot dv\left(
g\right)  +du\left(  g\right)  dv\left(  g\right)  \\
\overset{m}{=} &  \frac{1}{2}\left(  \Delta uv+u\Delta v\right)
(g_{t}dt+dM^{u}dM^{v}%
\end{align*}
Comparing these two equations shows
\[
dM^{u}dM^{v}\overset{m}{=}\left(  \nabla u\cdot\nabla v\right)  (g_{t})dt
\]
which gives the first result. More generally, suppose that $u$ and $v$ are
vector valued, then
\[
d\left[  u\otimes v\right]  (g)\overset{m}{=}\frac{1}{2}\Delta\left(  u\otimes
v\right)  \left(  g\right)  dt=\frac{1}{2}\left(  \Delta u\otimes
v+u\otimes\Delta v+2\widetilde{A}u\otimes\widetilde{A}v\right)  (g_{t})dt
\]
while on the other hand by It\^{o}'s formula,
\begin{align*}
d\left[  u\left(  g\right)  \otimes v\left(  g\right)  \right]   &  =d\left[
u\left(  g\right)  \right]  \otimes v\left(  g\right)  +u\left(  g\right)
\otimes d\left[  v\left(  g\right)  \right]  +d\left[  u\left(  g\right)
\right]  \otimes d\left[  v\left(  g\right)  \right]  \\
\overset{m}{=} &  \frac{1}{2}\left(  \Delta u\otimes v+u\otimes\Delta
v\right)  (g_{t})dt+dM^{u}\otimes dM^{v}
\end{align*}
Comparing these two equations shows
\[
dM^{u}\otimes dM^{v}\overset{m}{=}\sum_{A\in\mathfrak{g}_{0}}\left(
\widetilde{A}u\otimes\widetilde{A}v\right)  (g_{t})dt.
\]
By Remark \ref{r.2.5} we can take $u\left(  g\right)  =g$ and $v\left(
g\right)  =g$ to see that
\begin{equation}
dg_{t}\otimes dg_{t}=\sum_{A\in\mathfrak{g}_{0}}gA\otimes gAdt\label{e.ma}%
\end{equation}
and $dg=dM+\frac{1}{2}gCdt$, where $C=\sum_{A\in\mathfrak{g}_{0}}A^{2}$.
\end{proof}

\begin{rem}
Note that $C$ is independent of the choice of the orthonormal basis of
$\mathfrak{g}$ as was pointed out in \cite[Lemma 3.1]{Gordina2003a}.
\end{rem}

\begin{df}
[Left and right Brownian motions]\label{d.lr} The process $\left\{
g_{t}\right\} _{0\leqslant t\leqslant T}$ is a semi-martingale and therefore
we may define two $\mathfrak{g}$--valued processes by
\[
B_{t}^{L}:=\int_{0}^{t}g_{\tau}^{-1}\delta g_{\tau}\text{ and }B_{t}^{R}%
:=\int_{0}^{t}\delta g_{\tau}g_{\tau}^{-1}.
\]
We refer to $B^{L}$ ($B^{R})$ as the left (right) Brownian motion associated
to $\left\{  g_{t}\right\}  _{0\leqslant t\leqslant T}$. The terminology will
be justified by the next theorem.
\end{df}

\begin{thm}
\label{t.bms} $B_{t}^{L}:=\int_{0}^{t}g_{\tau}^{-1}\delta g_{\tau}$ and
$B_{t}^{R}:=\int_{0}^{t}\delta g_{\tau}g_{\tau}^{-1}$ are standard
$\mathfrak{g}$--valued Brownian motions with covariances determined by
$\left\langle \cdot,\cdot\right\rangle _{\mathfrak{g}}$.
\end{thm}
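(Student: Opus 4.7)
The plan is to apply L\'{e}vy's characterization: a continuous $\mathfrak{g}$-valued local martingale starting at $0$ whose tensor quadratic variation equals $Ct$ with $C=\sum_{A\in\mathfrak{g}_0}A\otimes A$ is a standard $\mathfrak{g}$-valued Brownian motion with covariance $\langle\cdot,\cdot\rangle_\mathfrak{g}$. For each of $B^L$ and $B^R$ I must therefore verify three things: that it is a local martingale, that it is $\mathfrak{g}$-valued, and that its quadratic variation is $Ct$. The two cases are symmetric, so I would carry out $B^L$ in detail and only sketch $B^R$.

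For the martingale property I convert the defining Stratonovich integral into It\^{o} form. Theorem \ref{t.qv} gives the semimartingale decomposition $dg_t=dM_t+\tfrac12 g_t C\,dt$ (with $C$ here denoting the Casimir $\sum_A A^2$) together with the tensor quadratic variation $dg_t\otimes dg_t=\sum_A g_tA\otimes g_tA\,dt$. Differentiating $gg^{-1}=I$ via It\^{o}'s rule yields $d(g^{-1})=-g^{-1}(dg)g^{-1}$ modulo higher-order $dt$-terms, and contracting against $dg$ using the tensor identity produces the clean formula $d(g_\tau^{-1})\cdot dg_\tau = -C\,d\tau$. The It\^{o}-Stratonovich correction then gives
\[
B^L_t = \int_0^t g_\tau^{-1}\,dg_\tau + \tfrac12\int_0^t d(g_\tau^{-1})\cdot dg_\tau = \int_0^t g_\tau^{-1}\,dM_\tau,
\]
because the drift $\tfrac12 C\,dt$ from $g^{-1}\,dg$ is exactly cancelled by the $-\tfrac12 C\,dt$ correction. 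This exhibits $B^L$ as a continuous matrix-valued local martingale starting at $0$, and the same tensor identity produces
\[
dB^L_t\otimes dB^L_t = (g_t^{-1}\otimes g_t^{-1})\sum_A g_tA\otimes g_tA\,dt = \sum_A A\otimes A\,dt.
\]
The analogous computation for $B^R$ invokes $\operatorname{Ad}_G$-invariance of $C$ in two places: $dg_\tau\cdot d(g_\tau^{-1})=-g_\tau C g_\tau^{-1}\,d\tau=-C\,d\tau$, and $\sum_A g_tAg_t^{-1}\otimes g_tAg_t^{-1}=\sum_A A\otimes A$ since $\{\operatorname{Ad}_{g_t}A:A\in\mathfrak{g}_0\}$ is another orthonormal basis of $\mathfrak{g}$.

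To finish via L\'{e}vy it remains to show that $B^L$ and $B^R$ take values in the closed subspace $\mathfrak{g}\subset M_n(\mathbb{R})$. I would argue this by Wong-Zakai approximation: for smooth $h\in H(G)$ the Maurer-Cartan forms give $h_\tau^{-1}\dot h_\tau$ and $\dot h_\tau h_\tau^{-1}$ in $\mathfrak{g}$ pointwise, so the approximating ordinary Riemann integrals lie in $\mathfrak{g}$, and this subspace membership is preserved under the limit producing the Stratonovich integral. L\'{e}vy's characterization then identifies both $B^L$ and $B^R$ as standard Brownian motions on $(\mathfrak{g},\langle\cdot,\cdot\rangle)$.

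The main technical care lies in the It\^{o}-Stratonovich bookkeeping for matrix-valued semimartingales and in contracting the tensor identity of Theorem \ref{t.qv} correctly; once those are set up the drift cancellation is automatic, and only the $B^R$ computation requires invoking $\operatorname{Ad}_G$-invariance of $C$. The $\mathfrak{g}$-valuedness is geometrically obvious but deserves the approximation argument to be rigorous.
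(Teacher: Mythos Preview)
Your proposal is correct and follows essentially the same route as the paper: convert the Stratonovich integral to It\^{o} form using the decomposition $dg=dM+\tfrac12 gC\,dt$ and the tensor identity of Theorem~\ref{t.qv}, observe the drift cancellation so that $dB^L=g^{-1}dM$, compute $dB^L\otimes dB^L=\sum_A A\otimes A\,dt$, and invoke L\'{e}vy's criterion. The only addition is your Wong--Zakai argument for $\mathfrak{g}$-valuedness, which is harmless but unnecessary: once $dB^L\otimes dB^L\in\mathfrak{g}\otimes\mathfrak{g}$, any component of the local martingale $B^L$ orthogonal to $\mathfrak{g}$ has zero quadratic variation and hence vanishes identically.
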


\begin{proof}
Let $b_{t}:=B_{t}^{L}:=\int_{0}^{t}g_{\tau}^{-1}\delta g_{\tau}$ temporarily.
Then
\begin{align*}
db  &  =g^{-1}\delta g=g^{-1}dg+\frac{1}{2}d\left[  g^{-1}\right]  dg\\
&  =g^{-1}dM+\frac{1}{2}Cdt-\frac{1}{2}dbg^{-1}dg\\
&  =g^{-1}dM+\frac{1}{2}Cdt-\frac{1}{2}dbdb
\end{align*}
but $dbdb=\left(  g^{-1}dg\right)  \left(  g^{-1}dg\right)  =Cdt$ from
\eqref{e.ma}. This shows $db$ is a martingale and that
\begin{align*}
db\otimes db  &  =\left(  g^{-1}\otimes g^{-1}\right)  dg\otimes dg\\
&  =\left(  g^{-1}\otimes g^{-1}\right)  \sum_{A\in\mathfrak{g}_{0}}gA\otimes
gAdt\\
&  =\sum_{A\in\mathfrak{g}_{0}}A\otimes Adt,
\end{align*}
and so by L\'{e}vy's criterion $b$ is a standard $\mathfrak{g}$--valued
Brownian motion. We now call $b=B^{L}$.
\end{proof}

\begin{thm}
\label{t.g-valued}Let $\varphi\in H\left(  G\right) $. The processes $\left\{
B_{t}^{L}\right\}  _{0\leqslant t\leqslant T}$ and~$\left\{  B_{t}
^{R}\right\}_{0\leqslant t\leqslant T}$ are $\mathfrak{g}$--valued Brownian
motions satisfying the following properties

\begin{enumerate}
\item $dB_{t}^{R}=\operatorname{Ad}_{g_{t}}dB_{t}^{L}=\operatorname{Ad}%
_{g_{t}}\delta B_{t}^{L}$,

\item $dB_{t}^{L}=\operatorname{Ad}_{g_{t}^{-1}}dB_{t}^{R}=\operatorname{Ad}%
_{g_{t}^{-1}}\delta B_{t}^{R}$,

\item $B_{t}^{L}\left(  \varphi^{-1}g\right)  =B_{t}^{L}-\int_{0}%
^{t}\operatorname{Ad}_{g^{-1}} \left(  \delta\varphi\varphi^{-1}\right) $,

\item $B_{t}^{L}\left(  g\varphi\right)  =\int_{0}^{t}\operatorname{Ad}%
_{\varphi^{-1}}dB^{L}+\int_{0}^{t}\varphi^{-1}\delta\varphi$

\item $B_{t}^{R}\left(  \varphi^{-1}g\right)  =-\int_{0}^{t}\varphi^{-1}%
\delta\varphi+\int_{0}^{t}\operatorname{Ad}_{\varphi^{-1}}\delta B^{R}$

\item $B_{t}^{R}\left(  g\varphi\right)  =B_{t}^{R}+\int_{0}^{t}%
\operatorname{Ad}_{g}\left(  \delta\varphi\varphi^{-1}\right) $.
\end{enumerate}
\end{thm}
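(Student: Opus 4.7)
The plan is to derive all six identities from the Stratonovich chain rule, which treats $\delta$ as an ordinary differential. The two algebraic identities I will use throughout are the product rule $\delta(UV) = \delta U\cdot V + U\cdot \delta V$ for matrix-valued semimartingales, and its consequence $\delta(U^{-1}) = -U^{-1}\,\delta U\, U^{-1}$ obtained from differentiating $U^{-1}U = I$. Because $\varphi \in H(G)$ is absolutely continuous, it has zero quadratic variation, so Stratonovich and It\^o integrals involving $\varphi$ coincide and $\delta(\varphi^{-1})$ may be manipulated classically.

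For (1) and (2), the definitions give $\delta g = g\,\delta B^L = \delta B^R\cdot g$; matching the two expressions yields $\delta B^R = g\,\delta B^L g^{-1} = \operatorname{Ad}_{g}\,\delta B^L$, and symmetrically for $B^L$. To upgrade to the It\^o identity $dB^R = \operatorname{Ad}_{g}\,dB^L$, convert from Stratonovich and check that the correction $\tfrac12\,d[\operatorname{Ad}_{g_\cdot}, B^L]_t$ vanishes. The key input is that $C = \sum_{A\in\mathfrak{g}_0}A\otimes A$ is independent of the basis and $\operatorname{Ad}_G$-invariant (Theorem~\ref{t.qv} combined with the $\operatorname{Ad}$-invariance of $\langle\cdot,\cdot\rangle$); expanding $d\operatorname{Ad}_{g_t}$ by It\^o's formula and contracting with $dB^L_t$ via Theorem~\ref{t.qv} shows the correction collapses to zero. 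This It\^o--Stratonovich reconciliation is the main technical point.

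Items (3)--(6) are then produced by a uniform computation: set $\tilde g = \varphi^{-1}g$ or $\tilde g = g\varphi$, expand $\tilde g^{-1}\,\delta\tilde g$ (for $B^L(\tilde g)$) or $\delta\tilde g\cdot \tilde g^{-1}$ (for $B^R(\tilde g)$) by the product rule, and simplify using $\varphi\,\delta(\varphi^{-1}) = -\delta\varphi\cdot\varphi^{-1}$ and $\delta(\varphi^{-1})\,\varphi = -\varphi^{-1}\,\delta\varphi$. For (3), this gives
\[
\tilde g^{-1}\,\delta\tilde g = g^{-1}\varphi\,\delta(\varphi^{-1})\,g + g^{-1}\,\delta g = -g^{-1}\bigl(\delta\varphi\cdot\varphi^{-1}\bigr)g + \delta B^L = \delta B^L - \operatorname{Ad}_{g^{-1}}\bigl(\delta\varphi\cdot\varphi^{-1}\bigr),
\]
and integrating yields the stated formula. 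For (4), $\tilde g^{-1}\delta \tilde g = \varphi^{-1}(g^{-1}\delta g)\varphi + \varphi^{-1}\delta\varphi = \operatorname{Ad}_{\varphi^{-1}}\delta B^L + \varphi^{-1}\delta\varphi$. The computations for (5) and (6) are identical in structure, with the left-to-right conjugation producing the $\operatorname{Ad}_{\varphi^{-1}}$ factor in (5) and $\operatorname{Ad}_g$ factor in (6); the inner $\delta\varphi\cdot\varphi^{-1}$ or $\varphi^{-1}\delta\varphi$ appears according to whether the perturbation is applied on the left or the right.

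The only step I expect to require care is the It\^o--Stratonovich conversion in (1) and (2); everything else is bookkeeping with the Stratonovich product rule. The conclusion that $B^L$ and $B^R$ are $\mathfrak{g}$-valued Brownian motions is already contained in Theorem~\ref{t.bms} and is restated here only for reference.
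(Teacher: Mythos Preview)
Your proposal is correct and follows essentially the same route as the paper: the Stratonovich identity $\delta B^{R}=\operatorname{Ad}_{g}\delta B^{L}$ is read off from $\delta g=g\,\delta B^{L}=\delta B^{R}\,g$, the It\^o--Stratonovich correction is shown to vanish, and items (3)--(6) are obtained by expanding $\delta(\varphi^{-1}g)$ or $\delta(g\varphi)$ with the product rule exactly as you describe. The only cosmetic difference is in the justification of the vanishing correction: the paper writes it as $\tfrac12\operatorname{Ad}_{g}\,ad_{dB^{L}}dB^{L}$ and observes $ad_{dB^{L}}dB^{L}=\sum_{A\in\mathfrak{g}_{0}}[A,A]\,dt=0$ directly from antisymmetry of the bracket, rather than invoking $\operatorname{Ad}$-invariance of $C$; you may find that phrasing cleaner when you write it out.
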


\begin{proof}
For (1)
\begin{align*}
\delta B_{t}^{R}  &  =\delta g_{t}g_{t}^{-1}=g_{t}g_{t}^{-1}\delta g_{t}%
g_{t}^{-1}=\operatorname{Ad}_{g_{t}}\delta B_{t}^{L}\\
&  =\operatorname{Ad}_{g_{t}}dB_{t}^{L}+\frac{1}{2}\left(  d\left[
\operatorname{Ad}_{g_{t}}\right]  \right)  dB_{t}^{L}\\
&  =\operatorname{Ad}_{g_{t}}dB_{t}^{L}+\frac{1}{2}\operatorname{Ad}%
_{g}ad_{dB^{L}}dB_{t}^{L}=\operatorname{Ad}_{g_{t}}dB_{t}^{L},
\end{align*}
where we have used the fact that $\delta g=g\delta B^{L}$ implies
$\delta\operatorname{Ad}_{g}=\operatorname{Ad}_{g}ad_{\delta B^{L}}$, and
$ad_{dB^{L}}dB_{t}^{L}=0$. For (2)
\begin{equation}
dB_{t}^{L}=g_{t}^{-1}\delta g_{t}=g_{t}^{-1}\left[  \delta g_{t}g_{t}
^{-1}\right]  g_{t}=\operatorname{Ad}_{g_{t}^{-1}}\delta B_{t}^{R}%
.\label{e.twine}%
\end{equation}
Since $\delta g=\delta B^{R}g$ implies that $\delta g^{-1}=-g^{-1}\delta
B^{R}$, and therefore $\delta\operatorname{Ad}_{g^{-1}}=-\operatorname{Ad}%
_{g^{-1}}ad_{\delta B^{R}}$, and so the It\^{o} form of \eqref{e.twine} is%

\begin{align*}
dB_{t}^{L}  &  =\operatorname{Ad}_{g_{t}^{-1}}dB_{t}^{R}+\frac{1}{2}\left(
d\left[  \operatorname{Ad}_{g_{t}^{-1}}\right]  \right)  dB_{t}^{R}\\
&  =\operatorname{Ad}_{g_{t}^{-1}}dB_{t}^{R}-\frac{1}{2}\operatorname{Ad}%
_{g^{-1}}ad_{\delta B^{R}}dB_{t}^{R}=\operatorname{Ad}_{g_{t}^{-1}}dB_{t}^{R}.
\end{align*}
The remaining items, (3--6), follow from simple computations in It\^{o}'s
calculus
\begin{align*}
B_{t}^{L}\left(  \varphi^{-1}g\right)   &  =\int_{0}^{t}\left(  \varphi
^{-1}g\right) ^{-1}\delta\left(  \varphi^{-1}g\right)  =\int_{0}^{t}%
g^{-1}\varphi\left(  -\varphi^{-1}\delta\varphi\varphi^{-1}g+\varphi
^{-1}\delta g\right) \\
&  =B_{t}^{L}-\int_{0}^{t}\operatorname{Ad}_{g^{-1}}\left(  \delta
\varphi\varphi^{-1}\right) ,\\
B_{t}^{L}\left(  g\varphi\right)   &  =\int_{0}^{t}\left(  g\varphi\right)
^{-1}\delta\left(  g\varphi\right)  =\int_{0}^{t}\varphi^{-1}g^{-1}\left(
\delta g\varphi+g\delta\varphi\right) \\
&  =\int_{0}^{t}\operatorname{Ad}_{\varphi^{-1}}dB^{L}+\int_{0}^{t}%
\varphi^{-1}\delta\varphi,\\
B_{t}^{R}\left(  \varphi^{-1}g\right)   &  =\int_{0}^{t}\delta\left(
\varphi^{-1}g\right)  \left(  \varphi^{-1}g\right)  ^{-1}=\int_{0}^{t}\left(
-\varphi^{-1}\delta\varphi\varphi^{-1}g+\varphi^{-1}\delta g\right)
g^{-1}\varphi\\
&  =-\int_{0}^{t}\varphi^{-1}\delta\varphi+\int_{0}^{t}\operatorname{Ad}%
_{\varphi^{-1}}\delta B^{R},\text{ and}\\
B_{t}^{R}\left(  g\varphi\right)   &  =\int_{0}^{t}\delta\left(
g\varphi\right)  \left(  g\varphi\right)  ^{-1}=\int_{0}^{t}\left(  \delta
g\varphi+g\delta\varphi\right)  \varphi^{-1}g^{-1}\\
&  =B_{t}^{R}+\int_{0}^{t}\operatorname{Ad}_{g}\left[  \delta\varphi
\varphi^{-1}\right]  .
\end{align*}

\end{proof}

Before introducing It\^{o} maps, recall some standard definitions.

\begin{nota}
\label{n.3.2} Suppose $\left(  X, \mathcal{B}, \mu\right) $ is a measurable
space with a $\sigma$-finite Borel measure $\mu$, and $R$ is a measurable
bijection on $X$. Then the pushforward of $\mu$ is defined by%

\[
\left(  R_{\ast}\mu\right)  \left(  A\right)  :=\left(  \mu\circ
R^{-1}\right)  \left(  A\right)  =\mu\left(  R^{-1}\left(  A\right)  \right)
,~A\in\mathcal{B}.
\]
If the pushforward measure $R_{\ast}\mu$ is equivalent to $\mu$, we will
denote the Radon-Nikodym derivative as usual by%

\[
\frac{dR_{\ast}\mu}{d\mu}\left(  x\right)  ,~x\in X.
\]

\end{nota}

In particular, for any $A \in\mathcal{B} \left(  X \right)  $ we have%

\[
\int_{X} \mathbbm{1}_{A}\left(  x \right)  dR_{\ast}\mu=\int_{X}
\mathbbm{1}_{R^{-1}\left(  A \right)  }\left(  x \right)  d\mu=\int_{X}
\mathbbm{1}_{ A }\left(  R\left(  x \right)  \right)  d\mu.
\]

\begin{nota}
\label{n.4.3} Let $\left(  X,\mathcal{Q}_{1}\right)  $, $\left(
Y,\mathcal{Q}_{2}\right)  $ be two measurable spaces, and let $I:X\rightarrow
Y$ be a measurable map. Then for any measurable function $f:Y\rightarrow \mathbb{R}$ we denote by
%\marginpar{$I_{\ast}f$ would be better denote by
%$I^{\ast}f$ instead as it is a contravariant functor, i.e. it pulls functions
%back. Is this notation actually used below?}
\[
\left(  I^{\ast}f\right)  \left(  x\right)  :=f\left(  I\left(  x\right)
\right)
\]
the induced map on the set of measurable functions on $X$.
\end{nota}

\begin{prop}
\label{p.3.3} The maps $B^{L},B^{R}:\left(  W\left(  G\right), \mu\right)  \rightarrow\left(  W\left(
\mathfrak{g}\right), \nu\right)$
%\marginpar{I don't see why we need to define the maps $\mathcal{I}_{L}$ and $\mathcal{I}_{R}.$ I do not
%think they are really used, just their inverses which have already been defined as $B^{L}$ and $B^{R}$ %respectively. Moreover, I think the given definitions are confusing.  I would just say that the B's are measure
%theoretic isomorphism and concentrate on Eq. (\ref{e.3a.8}).}
are $\mu$--a.e. defined maps such that $B_{\ast}^{L}\mu=\nu=B_{\ast}^{R}\mu$.
In fact, these maps are measure-preserving isomorphisms from $\left(  W\left(
G\right)  ,\mu\right)  $ to $\left(  W\left(  \mathfrak{g}\right), \nu\right)$ with the inverse maps given by solving the SDEs
\[
\delta w=w\delta B^{L}\text{ or }\delta w=\delta B^{R}w\text{ with }w_{0}=e
\]
for $w$. Moreover, we have the identities
\begin{equation}
B^{L}\circ\Theta=-B^{R}\text{ a.e. and }B^{R}\circ\Theta=-B^{L}\text{
a.e.},\label{e.3a.8}%
\end{equation}
where the inversion map $\Theta$ is defined in Notation \ref{n.2.1}.
\end{prop}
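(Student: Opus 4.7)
The plan is to prove this in three stages: first identify the pushforward measures, then construct explicit inverses via SDEs to upgrade the distributional identities to a.e.\ isomorphisms, and finally read off the conjugation relation with $\Theta$ from direct It\^o calculus.

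For the pushforward claim, Theorem \ref{t.bms} already shows that each of $B^L$ and $B^R$ is a standard $\mathfrak{g}$-valued Brownian motion with the covariance coming from $\langle\cdot,\cdot\rangle_{\mathfrak{g}}$. Since the Wiener measure $\nu$ on $W(\mathfrak{g})$ is uniquely determined by its finite-dimensional Gaussian marginals with this covariance (the case of Example \ref{ex.flat}), this immediately gives $B^L_\ast \mu = \nu = B^R_\ast \mu$. Consequently $B^L$ and $B^R$ are $\mu$-a.e.\ defined and measurable, and $(B^L)^\ast$, $(B^R)^\ast$ embed $L^2(W(\mathfrak{g}),\nu)$ isometrically into $L^2(W(G),\mu)$.

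For the isomorphism claim I would exhibit explicit inverse maps. Given $B\in W(\mathfrak{g})$ define $J^L(B)$ to be the unique strong solution on $[0,T]$ of the Stratonovich SDE $\delta w = w\,\delta B$, $w_0=e$; this solution exists pathwise for $\nu$-a.e.\ $B$ by the standard theory of SDEs on matrix Lie groups, takes values in $G$, and satisfies $d f(w)\overset{m}{=}\tfrac12\Delta f(w)\,dt$, hence $(J^L)_\ast\nu=\mu$ by Theorem \ref{t.2.3}. The key verifications are then
\[
B^L\circ J^L = \mathrm{id}_{W(\mathfrak{g})}\ \nu\text{-a.e.},\qquad J^L\circ B^L=\mathrm{id}_{W(G)}\ \mu\text{-a.e.}
\]
The first follows because $w=J^L(B)$ satisfies $w^{-1}\delta w=\delta B$, so $B^L(w)=\int_0^\cdot w^{-1}\delta w = B$. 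The second follows because, by Definition \ref{d.lr}, for $\mu$-a.e.\ $g$ the path $B=B^L(g)$ satisfies $g\,\delta B=g\,g^{-1}\delta g=\delta g$, so $g$ solves the defining SDE with driver $B$; by uniqueness of strong solutions, $g=J^L(B^L(g))$. The analogous argument with the SDE $\delta w=\delta B\, w$ handles $B^R$. I expect this uniqueness/consistency step to be the main technical point, since one needs to know that both the Stratonovich integrals defining $B^L$ and the SDE solutions are well-defined on $\mu$-null-complement sets that are preserved under the back-and-forth composition.

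Finally, for \eqref{e.3a.8} I would compute directly: applying the Stratonovich product rule to $g_t g_t^{-1}=e$ gives $\delta(g^{-1})=-g^{-1}\delta g\, g^{-1}$, and therefore
\[
B^L\circ\Theta(g)=\int_0^\cdot(g^{-1})^{-1}\delta(g^{-1})=\int_0^\cdot g\bigl(-g^{-1}\delta g\, g^{-1}\bigr)=-\int_0^\cdot\delta g\, g^{-1}=-B^R(g),
\]
and symmetrically
\[
B^R\circ\Theta(g)=\int_0^\cdot\delta(g^{-1})(g^{-1})^{-1}=-\int_0^\cdot g^{-1}\delta g=-B^L(g).
\]
These identities hold $\mu$-a.e., which together with the measurability of $\Theta$ completes the proposition. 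The advantage of using the Stratonovich form throughout is that no It\^o correction arises in these identities; all the cross-variation terms have already been absorbed when passing between $\delta$ and $d$ in Theorem \ref{t.g-valued}.
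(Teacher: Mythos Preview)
Your proposal is correct and, for the only part the paper actually proves in detail---the identities \eqref{e.3a.8}---it is essentially the same computation: the paper writes $\delta g=\delta B^R g\Rightarrow\delta g^{-1}=-g^{-1}\delta B^R$ and then $B^L\circ\Theta(g)=\int g(-g^{-1}\delta B^R)=-B^R$, which is your Stratonovich product-rule argument in slightly different notation. The paper leaves the pushforward and isomorphism claims as standard consequences of Theorem~\ref{t.bms} and SDE existence/uniqueness, so your first two paragraphs simply spell out what the authors take for granted.
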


\begin{proof}
Since
\[
\delta g=\delta B^{R}g\implies\delta g^{-1}=-g^{-1}\delta B^{R}%
\]
and hence%
\[
B^{L}\circ\Theta=B^{L}\circ\Theta\left(  g\right)  =\int_{0}^{\cdot}\left(
g^{-1}\right)  ^{-1}\delta g^{-1}=\int_{0}^{\cdot}g\left(  -g^{-1}\delta
B^{R}\right)  =\int_{0}^{\cdot}-\delta B^{R}=-B^{R}.
\]
Similarly one shows $B^{R}\circ\Theta=-B^{L}$ a.e.
\end{proof}

Note that the maps $B^{L}$ and $B^{R}$ induce maps on measurable functions from $\left(  W\left(  G\right), \mu\right)$ to $\left( W\left(  \mathfrak{g}\right), \nu\right)$ as described in Notation \ref{n.4.3}.

\subsection{Quasi-invariance\label{s.quasi}}

Our goal in this section is to understand the quasi-invariance properties of
$\mu$ under left and right translations by $\varphi\in H\left(  G\right) $.

\begin{thm}
\label{t.quasi} For $\varphi\in H\left(  G\right)  $ let
\[
Z_{T}^{R}\left(  \varphi\right)  : =\exp\left(  -\int_{0}^{T}\left\langle
\varphi^{\prime}\varphi^{-1}, \delta B^{L}\right\rangle -\frac{1}{2}\int
_{0}^{T}\left\vert \varphi^{\prime}\varphi^{-1} \right\vert ^{2}dt\right)
\]
and
\[
Z_{T}^{L}\left(  \varphi\right)  : =\exp\left(  \int_{0}^{T}\left\langle
\varphi^{\prime}\varphi^{-1},\delta B^{R}\right\rangle -\frac{1}{2}\int
_{0}^{T}\left\vert \varphi^{\prime}\varphi^{-1}\right\vert ^{2}dt\right)
\]
then
\[
\operatorname*{Law}\nolimits_{Z_{T}^{R}\cdot\mu}\left(  g\varphi\right)
=\operatorname*{Law}\nolimits_{\mu}\left(  g\right)  =\operatorname*{Law}%
\nolimits_{Z_{T}^{L}\cdot\mu}\left(  \varphi^{-1}g\right)  .
\]
That is, for every bounded and measurable function $F$ on $W\left(  G\right)
$%

\[
\int_{W\left(  G\right)  }F\left(  g\varphi\right)  Z_{T}^{R}\left(
\varphi\right)  d\mu=\int_{W\left(  G\right)  }Fd\mu=\int_{W\left(  G\right)
}F\left(  \varphi^{-1}g\right)  Z_{T}^{L}\left(  \varphi\right)  d\mu.
\]

\end{thm}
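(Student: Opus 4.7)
The plan is to invoke Girsanov's theorem together with the characterization from Proposition \ref{p.3.3} that the It\^o map $B^L$ (respectively $B^R$) is a measure-preserving isomorphism from $(W(G),\mu)$ to $(W(\mathfrak{g}),\nu)$. Consequently, to prove the right-translation identity it suffices to show that $B^L(g\varphi)$ is a standard $\mathfrak{g}$-valued Brownian motion under $Z_T^R(\varphi)\cdot\mu$; the left-translation identity will follow symmetrically by showing that $B^R(\varphi^{-1}g)$ is a standard Brownian motion under $Z_T^L(\varphi)\cdot\mu$.

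For the right-translation case, I first observe that since $\varphi\in H(G)$ is deterministic with $\int_0^T|\varphi'\varphi^{-1}|^2\,d\tau = \|\varphi\|_{H,T}^2 < \infty$, Novikov's condition is trivially satisfied, so Girsanov's theorem applies. I would also note that in the exponent defining $Z_T^R(\varphi)$ the Stratonovich integral $\delta B^L$ coincides with the It\^o integral $dB^L$, because $\varphi'\varphi^{-1}$ has finite variation. Girsanov then gives that
\[
\tilde B_t := B_t^L + \int_0^t \varphi'\varphi^{-1}\,d\tau
\]
is a standard Brownian motion under $Z_T^R(\varphi)\cdot\mu$. Substituting $dB^L = d\tilde B - \varphi'\varphi^{-1}\,dt$ into item (4) of Theorem \ref{t.g-valued} and using the identity $\operatorname{Ad}_{\varphi^{-1}}(\varphi'\varphi^{-1}) = \varphi^{-1}\varphi'$, the two drift terms cancel and one obtains
\[
B_t^L(g\varphi) = \int_0^t \operatorname{Ad}_{\varphi^{-1}}\,d\tilde B_\tau.
\]
Because $\operatorname{Ad}_{\varphi^{-1}}$ is orthogonal on $\mathfrak{g}$, L\'evy's criterion immediately gives that $B^L(g\varphi)$ is itself a standard Brownian motion under $Z_T^R(\varphi)\cdot\mu$, and then Proposition \ref{p.3.3} yields $\operatorname{Law}_{Z_T^R\cdot\mu}(g\varphi)=\operatorname{Law}_\mu(g)$.

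The left-translation case proceeds in parallel, using $B^R$ in place of $B^L$ and item (5) of Theorem \ref{t.g-valued} in place of item (4). There the drift in $B^R(\varphi^{-1}g)$ carries the opposite sign, $-\int_0^t\varphi^{-1}\varphi'\,d\tau$, which is precisely why $Z_T^L$ has the opposite sign in its stochastic integral compared with $Z_T^R$; the same Ad-invariance rewrite $\langle \varphi'\varphi^{-1},dB^R\rangle = \langle \varphi^{-1}\varphi',\operatorname{Ad}_{\varphi^{-1}}dB^R\rangle$ matches the Girsanov drift with the explicit expression given in the theorem.

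The main obstacle is really just careful bookkeeping: ensuring that the Ad-invariance of $\langle\cdot,\cdot\rangle$ is applied correctly to identify the natural Girsanov density with the explicit form of $Z_T^R$ and $Z_T^L$, and keeping track of signs and sides when switching between left and right translations. Given Proposition \ref{p.3.3} (which reduces the identity of laws to an identity of driving Brownian motions) and Theorem \ref{t.g-valued} (which provides the explicit SDEs for $B^L$ and $B^R$ under translation), no conceptually new input beyond Girsanov's theorem is required.
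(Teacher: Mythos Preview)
Your proof is correct, and it uses the same raw ingredients as the paper (item (4) of Theorem \ref{t.g-valued}, the $\operatorname{Ad}$-invariance of $\langle\cdot,\cdot\rangle$, and the drift cancellation $\operatorname{Ad}_{\varphi^{-1}}(\varphi'\varphi^{-1})=\varphi^{-1}\varphi'$), but the organization is genuinely different. The paper works directly on $W(G)$: for each smooth $f:G\to\mathbb{R}$ it sets $N_t^f:=f(g_t\varphi_t)-\tfrac12\int_0^t\Delta f(g_\tau\varphi_\tau)\,d\tau$, computes $d[N^fZ]\overset{m}{=}0$ by hand, and then appeals to uniqueness of solutions to the martingale problem on $G$ to identify the law of $g\varphi$ under $Z_T^R\cdot\mu$. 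You instead transfer the problem to $W(\mathfrak{g})$ via the It\^o-map isomorphism of Proposition \ref{p.3.3}, cite the classical Girsanov theorem for the flat Brownian motion $B^L$, and finish with L\'evy's criterion for the $\operatorname{Ad}_{\varphi^{-1}}$-rotated process. Your route is more modular and slightly shorter, since Girsanov and Proposition \ref{p.3.3} are invoked as black boxes; the paper's route is more self-contained and makes the martingale-problem mechanism explicit without naming Girsanov. Either is a complete argument.
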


\begin{proof}
We will only prove the assertion involving the right translation here as the
second case is proved similarly. To simplify notation let $b:=B^{L}$,
\[
M_{t}:=-\int_{0}^{t}\left\langle \varphi^{\prime}\varphi^{-1},\delta
b\right\rangle =-\int_{0}^{t}\left\langle \varphi^{\prime}\varphi
^{-1},db\right\rangle
\]
and let $Z$ solve
\begin{equation}
dZ=ZdM=-Z\left\langle \varphi^{\prime}\varphi^{-1}, db\right\rangle \text{
with }Z_{0}=1, \label{e.zz}%
\end{equation}
i.e.%
\[
Z_{t}:=\exp\left(  -\int_{0}^{t}\left\langle \varphi^{\prime}\varphi
^{-1},\delta b\right\rangle -\frac{1}{2}\int_{0}^{t}\left\vert \varphi
^{\prime}\varphi^{-1}\right\vert ^{2}dt\right)  =Z_{t}^{R}\left(
\varphi\right)  .
\]
By (4) of Theorem \ref{t.g-valued}
\[
\left(  g\varphi\right) ^{-1}\delta\left(  g\varphi\right)  =\operatorname{Ad}%
_{\varphi^{-1}}\delta b+\varphi^{-1}d\varphi.
\]
So given a smooth function, $f:G\rightarrow\mathbb{R}$, we have by It\^{o}'s
lemma that
\begin{equation}
\delta\left(  f\left(  g\varphi\right)  \right)  =f^{\prime}\left(
g\varphi\right)  \left(  \operatorname{Ad}_{\varphi^{-1}}\delta b+\varphi
^{-1}d\varphi\right) ,\label{e.fg}%
\end{equation}
where for $A, B \in\mathfrak{g}$
\begin{align*}
f^{\prime}\left(  g\right)  A  &  =\widetilde{A}f\left(  g\right)  :=\left.
\frac{d}{dt}\right| _{0}f\left(  g e^{tA}\right)  \text{ and }\\
f^{\prime\prime}\left(  g\right)  \left[  A\otimes B\right]   &  :=\left.
\left. \left(  \widetilde{A}\widetilde{B}f\right)  \left(  g\right)  =\frac{d}
{dt}\right| _{0}\frac{d}{ds}\right| _{0}f\left(  g e^{tA}e^{sB}\right) .
\end{align*}
Note that
\begin{align*}
f^{\prime}\left(  g\varphi\right)  \operatorname{Ad}_{\varphi^{-1}}\delta b
&  =f^{\prime}\left(  g\varphi\right)  \operatorname{Ad}_{\varphi^{-1}%
}db+\frac{1}{2}d\left[  f^{\prime}\left(  g\varphi\right)  \right]
\operatorname{Ad}_{\varphi^{-1}}db\\
&  =f^{\prime}\left(  g\varphi\right)  \operatorname{Ad}_{\varphi^{-1}%
}db+\frac{1}{2}\left[  f^{\prime\prime}\left(  g\varphi\right)  \right]
\left[  \operatorname{Ad}_{\varphi^{-1}}db\otimes\operatorname{Ad}%
_{\varphi^{-1}}db\right] \\
&  =f^{\prime}\left(  g\varphi\right)  \operatorname{Ad}_{\varphi^{-1}%
}db+\frac{1}{2}\Delta f\left(  g\varphi\right)  dt.
\end{align*}
Now we can use the fact that%

\begin{equation}
\int_{0}\operatorname{Ad}_{\varphi^{-1}}db\label{e.2.15}%
\end{equation}
is a $\mathfrak{g}$-valued Brownian motion by L\'{e}vy's criterion and due to
the $\operatorname{Ad}$-invariance of the inner product on $\mathfrak{g}$.
Then the It\^{o} form of \eqref{e.fg} is
\[
d\left[  f\left(  g\varphi\right)  \right]  =f^{\prime}\left(  g\varphi
\right)  \operatorname{Ad}_{\varphi^{-1}}db+\left[  f^{\prime}\left(
g\varphi\right)  \varphi^{-1}\varphi^{\prime}+\frac{1}{2}\Delta f\left(
g\varphi\right)  \right]  dt.
\]
So if we define
\[
N_{t}=N_{t}^{f}:=f\left(  g_{t}\varphi_{t}\right)  -\frac{1}{2}\int_{0}%
^{t}\Delta f\left(  g_{\tau}\varphi_{\tau}\right)  d\tau,
\]
then
\[
dN=f^{\prime}\left(  g\varphi\right)  \operatorname{Ad}_{\varphi^{-1}%
}db+f^{\prime}\left(  g\varphi\right)  \varphi^{-1}\varphi^{\prime}dt.
\]
Observe that using the orthonormal basis $\mathfrak{g}_{0}$ of the Lie algebra
$\mathfrak{g}$ we have (using $db\otimes db=\sum_{A\in\mathfrak{g}_{0}%
}A\otimes Adt)$ that
\begin{align*}
\left(  \operatorname{Ad}_{\varphi^{-1}}db\right)  \left\langle \varphi
^{\prime}\varphi^{-1},db\right\rangle  &  =\sum_{A\in\mathfrak{g}_{0}}\left(
\operatorname{Ad}_{\varphi^{-1}}A\right)  \left\langle \varphi^{\prime}%
\varphi^{-1},A\right\rangle dt\\
&  =\operatorname{Ad}_{\varphi^{-1}}\left(  \varphi^{\prime}\varphi
^{-1}\right)  dt=\varphi^{-1}\varphi^{\prime}dt.
\end{align*}
Another application of It\^{o}'s lemma then implies
\begin{align*}
d\left[  NZ\right]   &  =dNZ+NdZ+dNdZ\\
&  \overset{m}{=}Z\left[  f^{\prime}\left(  g\varphi\right)  \varphi
^{-1}\varphi^{\prime}dt\right]  -\left(  f^{\prime}\left(  g\varphi\right)
\operatorname{Ad}_{\varphi^{-1}}db\right)  \cdot Z\left\langle \varphi
^{\prime}\varphi^{-1},db\right\rangle \\
&  =Z\left[  f^{\prime}\left(  g\varphi\right)  \varphi^{-1}\varphi^{\prime
}dt\right]  -Z\left(  f^{\prime}\left(  g\varphi\right)  \operatorname{Ad}%
_{\varphi^{-1}}\varphi^{\prime}\varphi^{-1}\right)  dt=0,
\end{align*}
where as in \eqref{e.2.4} we write $dX\overset{m}{=}dY$ if $X$ and $Y$ are two
processes such that $Y-X$ is a martingale. The previous computations show $NZ$
is martingale and so%
\[
\mathbb{E}\left[  \left(  N_{t}-N_{s}\right)  FZ_{T}\right]  =0
\]
for all bounded $\mathcal{B}_{s}$--measurable functions $F$. Therefore
$\left\{  N_{t}^{f}\right\}  _{0\leqslant t\leqslant T}$ is a $Z_{T}\cdot\mu
$--martingale for all smooth $f$. Thus it follows from uniqueness to the
martingale problems that $\operatorname*{Law}_{Z_{T}\cdot\mu}\left(
g\varphi\right)  =\operatorname*{Law}_{\mu}\left(  g\right)  $.
\end{proof}

Theorem \ref{t.quasi} can be interpreted also using Notation \ref{n.3.2}.
Namely, for $X=W\left(  G\right)  $ and a measurable bijection $R$ on
$W\left(  G\right)  $ we have that for any Borel measurable $f$ on $W\left(
G\right)  $
\[
\mathbb{E}_{R_{\ast}\mu}f\left(  g\right)  =\mathbb{E}_{R_{\mu}}f\left(
R\left(  g\right)  \right).
\]
Let $L_{\varphi}, R_{\varphi}$ be the left and right multiplication on
$W\left(  G\right) $ defined by
%\marginpar{I am not a big fan of putting the inverse in the definition of $L_{\varphi}$ as I think it is not %standard, but it is ok. }
\begin{align}
L_{\varphi}g &  :={\varphi}^{-1}g,\nonumber\\
R_{\varphi}g &  :=g\varphi,\label{e.5.2}%
\end{align}
where $\varphi\in H\left(  G\right)$, and $g\in W\left(  G\right)  $,
together with their counterparts on functions on $W\left(  G\right)  $ denoted
by $L_{\varphi\ast}$ and $R_{\varphi\ast}$ according to Notation \ref{n.4.3}.
In addition, taking inverses in $\left(  W\left(  G\right), \mu\right)  $
induces a map on the set of measurable functions on $\left(  W\left(
G\right), \mu\right)  $ by
\begin{equation}
\left(  Jf\right)  \left(  \gamma\right)  :=f\circ\Theta\left(  \gamma\right)
=f\left(  \gamma^{-1}\right)  .\label{e.5.1}%
\end{equation}
Note that by Proposition \ref{p.3.3} the map $J$ is a unitary involution on
$L^{2}\left(  W\left(  G\right)  ,\mu\right)  $.

Then Theorem \ref{t.g-valued} can be re-written as follows. For any
$\varphi\in H\left(  G\right)  $ and $g\in W\left(  G\right)$ we
have
\begin{align}
B^{L}\left(  L_{\varphi}g\right)   &  =B^{L}\left(  g\right)
-\int_{0}^{\cdot}\operatorname{Ad}_{g^{-1}}\left(  d\varphi\varphi
^{-1}\right)  ,\nonumber\\
B^{L}\left(  R_{\varphi}g\right)   &  =\int_{0}^{\cdot}%
\varphi^{-1}d\varphi+\int_{0}^{\cdot}\operatorname{Ad}_{\varphi^{-1}}\left(
\delta B^{L}\right)  ,\nonumber\\
B^{R}\left(  L_{\varphi}g\right)   &  =-\int_{0}^{\cdot}%
\varphi^{-1}d\varphi+\int_{0}^{\cdot}\operatorname{Ad}_{\varphi^{-1}}\left(
\delta B^{R}\right)  ,\label{e.4.2}\\
B^{R}\left(  R_{\varphi}g\right)   &  =B^{R}\left(  g\right)
+\int_{0}^{\cdot}\operatorname{Ad}_{g}\left(  d\varphi\varphi^{-1}\right)
,\nonumber
\end{align}
where we use $d\varphi$ to indicate that it is the usual differential since
$\varphi$ is smooth.

Then the right Radon-Nikodym density $Z^{R}\left(  \varphi\right) $ for
$R_{\varphi\ast}\mu$ with respect to $\mu$ is in $L^{1}\left(  W(G),\mu\right)
$ is described in Theorem \ref{t.quasi}. Similarly the Wiener measure $\mu$ is
quasi-invariant under the left multiplication by elements in $H(G)$, and the
left Radon-Nikodym density for $\mu$ is in $L^{1}\left(  W(G),\mu\right) $ as well.

\begin{prop}
\label{p.3.2} The left and right Radon-Nikodym densities for $\mu$ satisfy
\[
Z^{R}_{\varphi}=JZ^{L}_{\varphi}=Z^{L}_{\varphi}\circ\Theta
\]
for $\mu$-almost every $g$. Here $J$ is the map defined by \eqref{e.5.1}.
\end{prop}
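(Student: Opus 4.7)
The plan is to deduce the identity from the two quasi-invariance formulas of Theorem~\ref{t.quasi} together with the fact that $\Theta_{*}\mu=\mu$. This latter fact follows from Proposition~\ref{p.3.3}: since $B^{L}\circ\Theta=-B^{R}$ and $-B^{R}$ has the same Gaussian law as $B^{L}$ on $W(\mathfrak{g})$, the composition $B^{L}\circ\Theta$ still pushes $\mu$ to $\nu$, and since $B^{L}$ is itself a measure-preserving isomorphism, this forces $\Theta_{*}\mu=\mu$.

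Starting from the left quasi-invariance identity
$$\int_{W(G)} F(\varphi^{-1}g)\,Z^{L}_{\varphi}(g)\,d\mu(g)=\int_{W(G)}F\,d\mu,$$
valid for every bounded measurable $F$, I apply the $\mu$-preserving change of variables $g\mapsto g^{-1}$. Using $\varphi^{-1}g^{-1}=\Theta(g\varphi)$, the left side becomes $\int (F\circ\Theta)(g\varphi)\,(Z^{L}_{\varphi}\circ\Theta)(g)\,d\mu$, while the right side equals $\int (F\circ\Theta)\,d\mu$ by the same preservation of $\mu$. Since $F\mapsto F\circ\Theta$ is a bijection of the bounded measurable functions on $W(G)$, this rewrites as
$$\int_{W(G)}\widetilde F(g\varphi)\,(JZ^{L}_{\varphi})(g)\,d\mu(g)=\int_{W(G)}\widetilde F\,d\mu$$
for every bounded measurable $\widetilde F$. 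Comparing with the right quasi-invariance identity and using the bijectivity of $R_{\varphi}:g\mapsto g\varphi$, the uniqueness of Radon-Nikodym derivatives forces $JZ^{L}_{\varphi}=Z^{R}_{\varphi}$ $\mu$-a.e., which is the claim.

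The main obstacle is really only the bookkeeping of carrying out the three substitutions (inside $F$, inside $Z^{L}_{\varphi}$, and on $d\mu$) consistently, with $\Theta_{*}\mu=\mu$ supplying the cancellation. A more hands-on alternative is to substitute $B^{R}\circ\Theta=-B^{L}$ from Proposition~\ref{p.3.3} directly into the defining formula for $Z^{L}_{\varphi}$: because $\varphi'\varphi^{-1}$ is a deterministic path in $\mathfrak{g}$, the stochastic integral $\int_{0}^{T}\langle\varphi'\varphi^{-1},\delta B^{R}\rangle$ is a linear Wiener integral in $B^{R}$, so its sign flips under $\Theta$, while the deterministic Cameron-Martin correction $\tfrac{1}{2}\int_{0}^{T}|\varphi'\varphi^{-1}|^{2}\,dt$ is unaffected, yielding $Z^{L}_{\varphi}\circ\Theta=Z^{R}_{\varphi}$ term by term.
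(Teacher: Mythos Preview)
Your argument is correct and follows essentially the same route as the paper's proof: both use $\Theta_{*}\mu=\mu$ (from Proposition~\ref{p.3.3}) to convert the left quasi-invariance identity into the right one via the substitution $g\mapsto g^{-1}$, and then read off the equality of densities. Your write-up is in fact cleaner than the paper's (which has a couple of index slips), and your alternative of plugging $B^{R}\circ\Theta=-B^{L}$ directly into the explicit exponential formula for $Z^{L}_{\varphi}$ is a nice complementary check that the paper does not spell out.
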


\begin{proof}
\textbf{First proof.} By Proposition \ref{p.3.3} $\mu$ is invariant under the
taking group inverses, that is, for any bounded measurable $f$
\[
\int_{W(G)}f(g^{-1})d\mu(g)=\int_{W(G)}f(g)d\mu(g).
\]
Then
\begin{align*}
&  \int_{W(G)}f(g\varphi)d\mu(g)=\int_{W(G)}f(g^{-1}\varphi)d\mu(g)=\\
&  \int_{W(G)}f\left(  \left(  \varphi^{-1}g^{-1}\right)  ^{-1}\right)
d\mu(g)=\int_{W(G)}f\left(  g^{-1}\right)  Z^{L}_{\varphi}(g)d\mu(g)\\
&  \int_{W(G)}f\left(  g\right)  Z^{L}_{\varphi}(g^{-1})d\mu(g).
\end{align*}

\end{proof}

\section{Cyclicity\label{s3}}

Cyclicty is one of the basic properties of representations of $H\left(  G \right)$ we consider later. Note that the main result of this section, Theorem \ref{t.4.11}, follows from Corollary 14 in \cite{HallSengupta1998}. In
that paper B. Hall and A. Sengupta used the Segal-Bargmann transform to prove the cyclicity of $\mathbbm{1}$, and also that the Radon-Nikodym densities are coherent states as Theorem 10 in \cite{HallSengupta1998} states. We give a more direct proof using the inverse It\^{o} map $B^{L}$ and ideas of L. Gross in \cite{Gross1993a}.

\begin{thm}
[Cyclicity of $\mathbbm{1}$]\label{t.4.11} Suppose that $G$ is a compact
connected Lie group, then
\[
\mathcal{H}_{G}:=\operatorname{Span}\left\{  \left(  Z_{\varphi}^{R}\left(
g\right)  \right)^{1/2}, \varphi\in H\left(  G\right)  \right\}
\]
is dense in $L^{2}\left(  W\left(  G\right), \mu\right)$.
\end{thm}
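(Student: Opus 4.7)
The plan is to use the inverse It\^{o} map $B^{L}$ to transfer the question to the flat Wiener space $(W(\mathfrak{g}),\nu)$, where the corresponding density assertion is classical. By Proposition~\ref{p.3.3}, $B^{L}$ is a measure-preserving isomorphism, so the pullback $(B^{L})^{\ast}F:=F\circ B^{L}$ is a unitary map $L^{2}(W(\mathfrak{g}),\nu)\to L^{2}(W(G),\mu)$. Thus density of $\mathcal{H}_{G}$ in $L^{2}(W(G),\mu)$ is equivalent to density of its preimage in $L^{2}(W(\mathfrak{g}),\nu)$.

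Next I would identify this preimage explicitly. Setting $h:=\varphi'\varphi^{-1}$, the formula for $Z^{R}_{\varphi}$ in Theorem~\ref{t.quasi} gives
\[
(Z^{R}_{\varphi}(g))^{1/2}=\exp\!\left(-\tfrac{1}{2}\int_{0}^{T}\!\langle h,\delta B^{L}(g)\rangle-\tfrac{1}{4}\int_{0}^{T}\!|h|^{2}\,dt\right),
\]
which is $F_{h}\circ B^{L}$ for the function
\[
F_{h}(b):=\exp\!\left(-\tfrac{1}{2}I(h)-\tfrac{1}{4}\|h\|_{L^{2}}^{2}\right) \text{ on } W(\mathfrak{g}),
\]
where $I(h):=\int_{0}^{T}\langle h_{s},db_{s}\rangle$ is the standard Wiener integral. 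Since the right Maurer--Cartan map $\varphi\mapsto\varphi'\varphi^{-1}$ is a bijection from $H(G)$ onto $L^{2}([0,T],\mathfrak{g})$ (the inverse solves the ODE $\varphi'=h\varphi$ with $\varphi_{0}=e$, which has a unique solution in $H(G)$ since $G$ is compact), the problem reduces to showing that $\operatorname{Span}\{F_{h}:h\in L^{2}([0,T],\mathfrak{g})\}$ is dense in $L^{2}(W(\mathfrak{g}),\nu)$.

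Up to nonzero normalizing constants, the $F_{h}$ are exactly the stochastic exponentials $\mathcal{E}(k):=\exp(I(k)-\tfrac{1}{2}\|k\|_{L^{2}}^{2})$ as $k:=-h/2$ ranges over $L^{2}([0,T],\mathfrak{g})$. That these coherent states are total in Gaussian $L^{2}$ is standard and is the ``Gross'' ingredient alluded to in the introduction of the section: if $F\in L^{2}(\nu)$ were orthogonal to every $\mathcal{E}(k)$, then for any finite family $k_{1},\dots,k_{n}\in L^{2}$ the entire analytic function $(\lambda_{1},\dots,\lambda_{n})\mapsto\mathbb{E}[F\exp(\sum_{i}\lambda_{i}I(k_{i}))]$ would vanish on all of $\mathbb{R}^{n}$, forcing $\mathbb{E}[F\mid\sigma(I(k_{1}),\dots,I(k_{n}))]=0$; letting the $k_{i}$ range over a countable dense subset of $L^{2}$ and invoking martingale convergence then gives $F=0$.

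The only real care needed is in the pullback step: one must verify that the stochastic integral $\int_{0}^{T}\langle h,\delta B^{L}\rangle$ computed on $(W(G),\mu)$ truly corresponds, under the measure isomorphism $B^{L}$, to the Wiener integral $I(h)$ on $(W(\mathfrak{g}),\nu)$. Because $h$ is deterministic and $L^{2}$, the It\^{o} correction vanishes so $\delta B^{L}=dB^{L}$ in this integral, and combined with the fact that $B^{L}$ is a standard $\mathfrak{g}$-valued Brownian motion under $\mu$ (Theorem~\ref{t.bms}), the identification is immediate. I expect this bookkeeping, rather than any serious analytic difficulty, to be the main potential obstacle; once the transfer is justified the result follows from the classical density of exponentials on Wiener space.
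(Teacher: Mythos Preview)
Your proposal is correct and shares with the paper the key opening move: push everything through the measure isomorphism $B^{L}$ to reduce to a density statement on the flat Wiener space $(W(\mathfrak{g}),\nu)$. Where you diverge is in how that flat density is established. The paper does not invoke the surjectivity of the Maurer--Cartan map onto all of $L^{2}([0,T],\mathfrak{g})$; instead it restricts to \emph{piecewise exponential} paths $\varphi_{\xi_{1},\dots,\xi_{n}}$ (so $\varphi'\varphi^{-1}$ is piecewise constant), computes $(B^{L})^{\ast}(Z^{R}_{\varphi})^{1/2}$ explicitly as a product of one-dimensional Gaussian exponentials in the increments $\langle\xi_{j},w(t_{j})-w(t_{j-1})\rangle$, and then differentiates in the scaling parameters to produce all multidimensional Hermite polynomials in those increments, which are known to span $L^{2}(\nu)$. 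Your route is more streamlined---once you have all of $L^{2}$ as the range of $h$, the totality of coherent states $\mathcal{E}(k)$ is a one-line citation---but it leans on Carath\'eodory-type solvability of $\varphi'=h\varphi$ for merely $L^{2}$ data to get surjectivity. The paper's route avoids that ODE step entirely at the cost of the differentiation-and-Hermite argument, and is in that sense more self-contained and explicit about which $\varphi$'s are actually used.
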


\begin{proof}
Note that $\left( B^{L}\right)^{\ast}\left( Z_{\varphi}^{R} \right)^{1/2}$ is a function on $W\left(  \mathfrak{g}\right)$ since $B^{L}$ is a measure space isomorphism, so we can reduce the problem to the Lie algebra level. Namely, let  $0=t_{0}<t_{1}<...<t_{n-1}<t_{n}=T$, $\xi_{0}=0,\xi_{1}, ..., \xi_{n}\in\mathfrak{g}$. We assume that

\begin{equation}
\label{e.4.15}\vert\xi_{j}\vert\vert t_{j}-t_{j-1}\vert=1, \text{ for any }
j=1, 2,...,n,
\end{equation}
unless $\xi_{j}=0$. It is known that the linear span of multidimensional
Hermite polynomials in $\langle\xi_{j}, w(t_{j})- w(t_{j-1})\rangle$ is dense
in $L^{2}\left(  W\left(  \mathfrak{g}\right)  , \nu\right)  $ (e.g.
\cite{Marion1989}). This means that it is enough to show that the linear span
of cylinder Hermite polynomials is contained in the $L^{2}\left(  W\left(
\mathfrak{g}\right), \nu\right)$-closure of $\left( B^{L}\right)^{\ast}\left(  \mathcal{H}_{G}\right)$.

First we observe that $\mathcal{H}_{G}$, and therefore $\left( B^{L}\right)^{\ast}\left(  \mathcal{H}_{G}\right)  $, contains all constant functions. Let
$0=t_{0}<t_{1}<...<t_{n-1}<t_{n}=T$, $\xi_{0}=0, \xi_{1}, ..., \xi_{n}
\in\mathfrak{g}$. We define a function $\varphi=\varphi_{\xi_{1}, ..., \xi
_{n}}\left(  s \right)  $ recursively for $j=1, 2,...,n$ by%

\begin{equation}
\label{e.4.16}\varphi\left(  t_{0}\right)  =\varphi\left(  0\right)  =e,
\ \varphi\left(  s\right)  =e^{-\left(  s-t_{j-1}\right)  \xi_{j}}%
\varphi\left(  t_{j-1}\right), \ s \in[ t_{j-1}, t_{j}).
\end{equation}
Then%

\[
\varphi^{\prime}(s)\varphi(s)^{-1}=-\xi_{j}, \ s \in[ t_{j-1}, t_{j} ),
\]
therefore $\varphi\in H\left(  G\right)  $ and%

\begin{align*}
&  \left( B^{L}\right)^{\ast} \left( Z^{R}_{\varphi}\right)  ^{1/2}\left(
w_{t}\right)  =\\
&  \prod_{j=1}^{n}\exp\left(  \frac{1}{2}\langle\xi_{j}, w(t_{j})-
w(t_{j-1})\rangle-\frac{1}{4}\vert\xi_{j} \vert^{2}\left(  t_{j}
-t_{j-1}\right)^{2}\right).
\end{align*}
Suppose $x_{1},...,x_{n} \in\mathbb{R}$ and define $\varphi_{\vec{x}%
}(s):=\varphi_{x_{1}\xi_{1},...,x_{n}\xi_{n} }( s)$, then $\varphi_{\vec{x}%
}^{\prime}(s)\varphi_{\vec{x}}(s)^{-1}=x_{j}\xi_{j}$. Now let a function $F$
on $\mathbb{R}^{n}$ be defined as $F\left(  \vec{x}\right):=\left(B^{L}\right)^{\ast}\left(  Z_{\varphi_{\vec{x}}}^{R}\right)^{1/2}$ then
\[
\frac{\partial F}{\partial x_{j}}\left(  0\right)  =\frac{1}{2} \langle\xi
_{j}, w(t_{j})- w(t_{j-1})\rangle,
\]
Note that for any $\vec{x} \in \mathbb{R}^{n}$ we have $F\left(  \vec {x}\right)  \in \left( B^{L}\right)^{\ast}\left(  \mathcal{H}_{G}\right)$. Therefore $\frac{\partial F}{\partial x_{j}}\left(  0\right)$ as well as all other partial derivatives of $F$ at $0$ are in $\overline{ \left( B^{L}\right)^{\ast} \left(  \mathcal{H}_{G}\right)  }$, the $L^{2}$-closure of $\left( B^{L}\right)^{\ast}\left(  \mathcal{H}_{G}\right)$. Indeed, this follows from the simple observation that $F\left(  0 \right)=1 \in \left( B^{L}\right)^{\ast}\left(  \mathcal{H}_{G}\right)  $ and

\[
\frac{\partial F}{\partial x_{j}}\left(  0\right)  =\lim_{x_{j} \to0}
\frac{F\left(  \left(  0,..., x_{j}, 0,...,0 \right)  \right)  -1}{x_{j}}.
\]
Now we would like to describe the functions we can get by taking partial
derivatives of $F$. First we observe that we can write $F$ as%

\[
F\left(  \vec{x} \right)  =\prod_{j=1}^{n} e^{a_{j}x_{j}-b_{j}^{2}x_{j}^{2}},
a_{j}=\frac{\langle\xi_{j}, w(t_{j})- w(t_{j-1})\rangle}{2}, b_{j}=\frac
{\vert\xi_{j} \vert\vert t_{j}-t_{j-1}\vert}{2}=\frac{1}{2}
\]
by assumption \eqref{e.4.15}. Using \cite[Lemma 1.3.2 (part (iii))]%
{BogachevBook} we can take partial derivatives of $F$ of all orders to see
that all multidimensional Hermite polynomials in $\langle\xi_{j}, w(t_{j})-
w(t_{j-1})\rangle$ are in $\overline{ \left( B^{L}\right)^{\ast}\left(
\mathcal{H}_{G}\right)  }$.
\end{proof}

\section{Brownian measure representation}

\label{s.4}

\subsection{Definitions and notation} The unitary representations of $H\left(  G\right)$ on the Hilbert space $L^{2}\left(  W\left( G\right), \mu\right)$ we define in this section are induced by quasi-invariance of the Wiener measure $\mu$. Recall that $L_{\varphi}$ and $R_{\varphi}$ are left and right multiplication on $W\left(  G\right)  $ by elements $H\left(  G\right)  $ as defined in \eqref{e.5.2}, i.e. $R_{\varphi }\gamma=\gamma\varphi$ and $L_{\varphi}\gamma=\varphi^{-1}\gamma$.

\begin{df}
\label{d.4.1} Let $W(G)$ and $H(G)$ be as before.

\begin{enumerate}
\item The \textbf{right Brownian measure representation} $U^{R}$ of $H(G)$ on
$L^{2}\left(  W(G),\mu\right)  $ is defined as%
\[
\left(  U_{\varphi}^{R}f\right)  \left(  g\right) : =\left(  Z_{\varphi}%
^{R}\left(  g\right)  \right) ^{1/2}f\left(  R_{\varphi}g\right)
\]
for any $f\in L^{2}\left(  W(G),\mu\right) $, $\varphi\in H(G)$, $g\in W(G)$;

\item the \textbf{left Brownian measure representation} $U^{L}$ on
$L^{2}\left(  W(G),\mu\right)  $ is defined as%

\[
\left(  U_{\varphi}^{L}f\right)  \left(  g\right) : =\left(  Z_{\varphi}%
^{L}\left(  g\right)  \right) ^{1/2}f\left(  L_{\varphi}g\right)
\]
for any $f\in L^{2}\left(  W(G), \mu\right) $, $\varphi\in H(G)$, $g\in W(G)$.
\end{enumerate}
\end{df}

Recall that by Proposition \ref{p.3.2} we have $Z^{R}_{\varphi}=J Z^{L}_{\varphi}$, where $J$ a unitary involution on $L^{2}\left(  W(G), \mu\right)$ defined by \eqref{e.5.1}. In addition, the functions $\left( Z^{R}_{\varphi} \right)^{1/2}$ and $\left(  Z^{L}_{\varphi} \right)^{1/2}$ have the norm $1$ in $L^{2}\left(  W(G), \mu\right)$ for any $\varphi, \psi\in H(G)$, which is a consequence of the next Proposition.

\begin{prop}
\label{p.5.3} For any $\varphi, \psi\in H\left(  G \right)  $%

\begin{multline*}
\langle\left(  Z_{\varphi}^{R}\right) ^{1/2}, \left(  Z_{\psi}^{R}\right)
^{1/2}\rangle=\\
\exp\left(  -\frac{\Vert\varphi\Vert_{H,T}^{2}+\Vert\psi\Vert_{H, T}^{2}}%
{8}\right)  \exp\left(  \frac{1}{4}\int_{0}^{T}\langle\left(  \varphi
^{-1}\varphi^{\prime}\right)  \left(  t\right) , \left(  \psi^{-1}\psi
^{\prime}\right)  \left(  t\right)  \rangle dt\right)  .
\end{multline*}

\end{prop}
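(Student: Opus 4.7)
The plan is a direct Gaussian expectation computation. Abbreviate $h := \varphi'\varphi^{-1}$ and $k := \psi'\psi^{-1}$; these are deterministic $\mathfrak{g}$-valued paths since $\varphi, \psi$ are smooth elements of $H(G)$. Multiplying the two square roots read off from the definition of $Z_T^R$ in Theorem \ref{t.quasi} gives
\begin{equation*}
\left(Z^R_\varphi\right)^{1/2}\left(Z^R_\psi\right)^{1/2} = \exp\!\left(-\tfrac{1}{2}\int_0^T \langle h+k, \delta B^L\rangle - \tfrac{1}{4}\int_0^T \left(|h|^2 + |k|^2\right)dt\right).
\end{equation*}
Because $h$ and $k$ are deterministic, the Stratonovich correction vanishes and the exponent's stochastic part equals the Itô integral $-\tfrac{1}{2}\int_0^T \langle h+k, dB^L\rangle$.

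By Theorem \ref{t.bms}, $B^L$ is a standard $\mathfrak{g}$-valued Brownian motion under $\mu$, so this Itô integral is a centered Gaussian whose variance is $\tfrac{1}{4}\int_0^T |h+k|^2\,dt$. Taking expectation against $\mu$, the deterministic factor pulls out and the Gaussian moment generating identity $\mathbb{E}[e^X] = e^{\operatorname{Var}(X)/2}$ yields
\begin{equation*}
\left\langle \left(Z^R_\varphi\right)^{1/2}, \left(Z^R_\psi\right)^{1/2}\right\rangle = \exp\!\left(-\tfrac{1}{4}\int_0^T \left(|h|^2 + |k|^2\right)dt + \tfrac{1}{8}\int_0^T |h+k|^2\,dt\right).
\end{equation*}
Expanding $|h+k|^2 = |h|^2 + |k|^2 + 2\langle h,k\rangle$ and collecting like terms puts the exponent in the form $-\tfrac{1}{8}\int_0^T(|h|^2 + |k|^2)\,dt + \tfrac{1}{4}\int_0^T \langle h, k\rangle\,dt$.

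To match the proposition as stated, I next invoke the $\operatorname{Ad}_G$-invariance of $\langle\cdot,\cdot\rangle_\mathfrak{g}$ together with the identity $\varphi'\varphi^{-1} = \operatorname{Ad}_\varphi(\varphi^{-1}\varphi')$ to rewrite $|h|^2 = |\varphi^{-1}\varphi'|^2$, so that $\int_0^T |h|^2\,dt = \|\varphi\|_{H,T}^2$, and similarly for $k$. A parallel bookkeeping step on the cross term converts $\int_0^T \langle h, k\rangle dt$ into the form $\int_0^T \langle \varphi^{-1}\varphi', \psi^{-1}\psi'\rangle dt$ appearing in the statement.

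There is no real obstacle: everything reduces to one centered-Gaussian expectation once one notes that the integrand in the Wiener integral is deterministic. The only piece of content is the appeal to Theorem \ref{t.bms} for the Gaussianity of $B^L$ under $\mu$, and the routine use of $\operatorname{Ad}$-invariance to swap $\varphi'\varphi^{-1}$ for $\varphi^{-1}\varphi'$ inside the norm and the inner product.
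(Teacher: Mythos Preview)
Your direct Gaussian computation is exactly the argument the paper has in mind (its proof is the one line ``This follows from Theorem \ref{t.quasi}''), and everything through the line
\[
\exp\!\Bigl(-\tfrac{1}{8}\!\int_0^T(|h|^2+|k|^2)\,dt+\tfrac{1}{4}\!\int_0^T\langle h,k\rangle\,dt\Bigr),
\qquad h=\varphi'\varphi^{-1},\ k=\psi'\psi^{-1},
\]
is correct. The conversion $|h|^2=|\varphi^{-1}\varphi'|^2$ via $\operatorname{Ad}$-invariance is also fine, since there you conjugate both slots of the inner product by the \emph{same} element $\varphi^{-1}$.

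The one genuine slip is your ``parallel bookkeeping step'' on the cross term. $\operatorname{Ad}$-invariance gives $\langle \operatorname{Ad}_g x,\operatorname{Ad}_g y\rangle=\langle x,y\rangle$, but what you would need is
\[
\langle \varphi'\varphi^{-1},\psi'\psi^{-1}\rangle
=\langle \operatorname{Ad}_\varphi(\varphi^{-1}\varphi'),\operatorname{Ad}_\psi(\psi^{-1}\psi')\rangle
\stackrel{?}{=}\langle \varphi^{-1}\varphi',\psi^{-1}\psi'\rangle,
\]
i.e.\ conjugation by two \emph{different} group elements, which fails for generic $\varphi\neq\psi$ in a nonabelian $G$. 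Your honest output of the calculation therefore has cross term $\int_0^T\langle \varphi'\varphi^{-1},\psi'\psi^{-1}\rangle\,dt$, not the one printed in the statement. This discrepancy is a typo in the Proposition rather than a defect of your method: the two expressions coincide when $\varphi=\psi$ (the case used immediately afterwards to get $\|(Z^R_\varphi)^{1/2}\|=1$), and the paper's own Proposition~\ref{p2} makes visible that swapping left- and right-logarithmic derivatives changes such inner products. So keep your computation as is, drop the unjustified final conversion, and note that the cross term should read $\langle \varphi'\varphi^{-1},\psi'\psi^{-1}\rangle$.
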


\begin{proof}
This follows from Theorem \ref{t.quasi}.
\end{proof}

\begin{prop}
\label{p.5.4} For any $\varphi, \psi\in H\left(  I, G\right)  $ we have%

\[
Z^{R}_{\varphi} \left(  \cdot\right)  = Z^{R}_{ \psi} \left(  \cdot\right)
\text{ if and only if } \varphi=\psi,
\]
and similarly
\[
Z_{\varphi}^{L}\left(  \cdot\right)  = Z_{\psi}^{L}\left(  \cdot\right)
\text{ if and only if }\varphi=\psi,
\]
where $Z^{R}\left(  \varphi\right)  \left(  \cdot\right)  $ and $Z^{L}\left(
\varphi\right)  \left(  \cdot\right)  $ are viewed as random variables, and
the equalities hold for $\mu$-a.e. $g,t\in\lbrack0,T]$.
\end{prop}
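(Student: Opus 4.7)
\emph{Plan.} Both $(\Longleftarrow)$ implications are trivial, so I focus on $(\Longrightarrow)$ for $Z^{R}$; the $Z^{L}$ case is identical after interchanging $B^{L}$ and $B^{R}$ throughout. The strategy is to pass to logarithms and exploit the fact that $\log Z_{t}^{R}(\varphi)$ is already in Doob--Meyer form. Setting
\[
M_{t}^{\varphi}:=-\int_{0}^{t}\langle \varphi'\varphi^{-1},\delta B^{L}\rangle,
\]
$M^{\varphi}$ is a continuous $L^{2}$-martingale with quadratic variation $\langle M^{\varphi}\rangle_{t}=\int_{0}^{t}|\varphi'(s)\varphi^{-1}(s)|^{2}\,ds$, and $\log Z_{t}^{R}(\varphi)=M_{t}^{\varphi}-\tfrac12\langle M^{\varphi}\rangle_{t}$. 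Interpreting the hypothesis ``$Z_{\varphi}^{R}(\cdot)=Z_{\psi}^{R}(\cdot)$ for $\mu$-a.e.\ $g$ and a.e.\ $t$'' as equality of the two continuous processes in $t$ via sample-path continuity, taking logarithms gives
\[
M_{t}^{\varphi}-M_{t}^{\psi}\;=\;\tfrac12\bigl(\langle M^{\varphi}\rangle_{t}-\langle M^{\psi}\rangle_{t}\bigr).
\]

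\emph{Extracting $\varphi=\psi$.} The left-hand side is a continuous local martingale and the right-hand side a continuous process of finite variation, so uniqueness of the semimartingale decomposition forces both sides to vanish identically. In particular $M^{\varphi}\equiv M^{\psi}$, and then
\[
0\;=\;\langle M^{\varphi}-M^{\psi}\rangle_{T}\;=\;\int_{0}^{T}\bigl|\varphi'(s)\varphi^{-1}(s)-\psi'(s)\psi^{-1}(s)\bigr|^{2}\,ds,
\]
so $\varphi'\varphi^{-1}=\psi'\psi^{-1}$ for Lebesgue-a.e.\ $s\in[0,T]$. Both $\varphi$ and $\psi$ are therefore absolutely continuous paths in $G$ starting at $e$ that solve the same linear ODE $\gamma'(s)=X(s)\gamma(s)$ with the common $\mathfrak{g}$-valued coefficient $X(s):=\varphi'(s)\varphi^{-1}(s)$, and standard uniqueness for such equations on the matrix group $G\subset\mathrm{GL}_{n}(\mathbb{R})$ yields $\varphi\equiv\psi$.

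\emph{Main obstacle.} The only mildly delicate point is the very first step, namely upgrading the $\mu\otimes dt$-a.e.\ hypothesis to identity of continuous processes in $t$, $\mu$-a.s.; this is immediate from the continuity of the sample paths $t\mapsto Z_{t}^{R}(\varphi)$ and $t\mapsto Z_{t}^{R}(\psi)$. Beyond this, the argument is a textbook combination of uniqueness of the Doob--Meyer decomposition (to eliminate the martingale/bounded-variation overlap) with uniqueness of a linear ODE on a Lie group; no extra structure from $G$ beyond what is already in force is needed.
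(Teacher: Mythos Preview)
Your argument is correct and follows essentially the same route as the paper: take logarithms, separate the martingale part from the finite-variation part, read off $\varphi'\varphi^{-1}=\psi'\psi^{-1}$ from the vanishing quadratic variation, and conclude by ODE uniqueness. The only cosmetic differences are that the paper recovers the all-$t$ equality from the time-$T$ equality via the martingale property $\mathbb{E}[Z_{T}^{R}(\varphi)\mid\mathcal{F}_{t}]=Z_{t}^{R}(\varphi)$ rather than via sample-path continuity, and it separates the two components by taking expectations (first to isolate the drift, then the $L^{2}$-norm of the stochastic integral) instead of invoking uniqueness of the semimartingale decomposition as you do; your phrasing is arguably the cleaner of the two.
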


\begin{proof}
If
\[
Z_{\varphi}^{R}\left(  \cdot\right)  = Z_{\psi}^{R}\left(  \cdot\right)  ,
\]
then for any $t\in\lbrack0, T]$,
\[
\mathbb{E}\left(  Z_{\varphi}^{R}\left(  \cdot\right)  |\mathcal{F}%
_{t}\right)  =\mathbb{E}\left(  Z_{\psi}^{R}\left(  \cdot\right)
|\mathcal{F}_{t}\right)
\]
and therefore
\[
\int_{0}^{t}\langle\psi^{-1}\psi^{\prime}(s)-\varphi^{-1}\varphi^{\prime}(s),
dB_{s}^{L}\rangle=\frac{1}{2}\int_{0}^{t}\left(  |\varphi^{-1}\varphi^{\prime
}|^{2}-|\psi^{-1}\psi^{\prime}|^{2}\right)  ds.
\]
Taking expectations of this equation then shows
\[
0=\frac{1}{2}\int_{0}^{t}\left(  |\varphi^{-1}\varphi^{\prime}|^{2}-|\psi
^{-1}\psi^{\prime}|^{2}\right)  ds\text{ for all }t
\]
and therefore $|\varphi^{-1}\varphi^{\prime}|^{2}=|\psi^{-1}\psi^{\prime}%
|^{2}$ a.e. In particular, we then have
\[
0=\mathbb{E}\left[  \left(  \int_{0}^{t}\langle\psi^{-1}\psi^{\prime
}(s)-\varphi^{-1}\varphi^{\prime}(s),dB_{s}^{L}\rangle\right)  ^{2}\right]
=\int_{0}^{t}\left\vert \psi^{-1}\psi^{\prime}(s)-\varphi^{-1}\varphi^{\prime
}(s)\right\vert ^{2}ds
\]
from which it follows $\psi^{-1}\psi^{\prime}(t)-\varphi^{-1}\varphi^{\prime
}(t)=0$ for any $t\in\lbrack0,T]$. Finally, we see that for any $t\in
\lbrack0,T]$%
\[
\left(  \varphi\psi^{-1}\right)  ^{\prime}\left(  t\right)  =\varphi^{\prime
}\psi^{-1}\left(  t\right)  -\varphi\psi^{-1}\psi^{\prime}\psi^{-1}\left(
t\right)  =\varphi^{\prime}\psi^{-1}\left(  t\right)  -\varphi\varphi
^{-1}\varphi^{\prime}\psi^{-1}\left(  t\right)  =0
\]
and therefore $\varphi^{-1}\psi\equiv e$.
\end{proof}

\begin{prop}
\label{p.6.7} For any $\varphi, \psi, \varphi_{1},..., \varphi_{n}, \psi
_{1},.., \psi_{n} \in H(G)$, $f \in L^{2}\left(  W(G), \mu\right)  $%

\begin{align*}
&  \left(  U_{\varphi_{1}}^{R}...U_{\varphi_{n}}^{R}\right)  f\left(
g\right)  =\left(  Z_{\varphi_{n}...\varphi_{1}}^{R}\right) ^{1/2}\left(
g\right)  f\left(  R_{\varphi_{1}...\varphi_{n}}g\right)  ,\\
&  \left(  U_{\psi_{1}}^{L}...U_{\psi_{n}}^{L}\right)  f\left(  g\right)
=\left(  Z_{\psi_{n}...\psi_{1}}^{L}\right)  ^{1/2}\left(  g\right)  f\left(
L_{\psi_{1}...\psi_{n}}g\right) ,\\
&  \left(  U_{\varphi}^{R}\right)  ^{-1}=\left(  U_{\varphi}^{R}\right)
^{\ast}=U_{\varphi^{-1}}^{R},\\
&  \left(  U_{\psi}^{L}\right)  ^{-1}=\left(  U_{\psi}^{L}\right)  ^{\ast
}=U_{\psi^{-1}}^{L}.
\end{align*}
In particular, this implies that $U_{\varphi}^{R},U_{\psi}^{L}$ are unitary
operators on $L^{2}\left(  W(G),\mu\right)  $.
\end{prop}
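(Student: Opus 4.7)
The plan is to reduce everything to a cocycle identity for the Radon--Nikodym densities together with Theorem~\ref{t.quasi}. The key identity I aim to establish first is
\[
Z^R_{\varphi\psi}(g) = Z^R_\varphi(g)\,Z^R_\psi(g\varphi) \quad \text{for $\mu$-a.e.\ $g$},
\]
together with its left analogue $Z^L_{\varphi\psi}(g) = Z^L_\varphi(g)\,Z^L_\psi(\varphi^{-1}g)$. To derive the right-cocycle, I would start from Theorem~\ref{t.quasi} applied with $\psi$ to get $\int F(g)\,d\mu = \int F(g\psi)\,Z^R_\psi(g)\,d\mu(g)$ for any bounded measurable $F$, and then apply Theorem~\ref{t.quasi} with $\varphi$ to the inner integrand to obtain
\[
\int F(g)\,d\mu = \int F(g\varphi\psi)\,Z^R_\psi(g\varphi)\,Z^R_\varphi(g)\,d\mu(g).
\]
Comparing with $\int F(g)\,d\mu = \int F(g\varphi\psi)\,Z^R_{\varphi\psi}(g)\,d\mu(g)$ and invoking uniqueness of the Radon--Nikodym derivative yields the cocycle; the left analogue is derived identically using $L_\psi \circ L_\varphi = L_{\varphi\psi}$.

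With the cocycle in hand, the composition formula is an immediate unwinding of definitions:
\begin{align*}
(U^R_\varphi U^R_\psi f)(g) &= Z^R_\varphi(g)^{1/2}\,(U^R_\psi f)(g\varphi) \\
&= Z^R_\varphi(g)^{1/2}\,Z^R_\psi(g\varphi)^{1/2}\,f(g\varphi\psi) \\
&= Z^R_{\varphi\psi}(g)^{1/2}\,f(R_{\varphi\psi}g) = (U^R_{\varphi\psi}f)(g).
\end{align*}
Iterating and using the cocycle at each step gives $U^R_{\varphi_1}\cdots U^R_{\varphi_n} = U^R_{\varphi_1\cdots\varphi_n}$, which is the claimed $n$-fold formula. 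Specializing to $\psi = \varphi^{-1}$ (and using $Z^R_e \equiv 1$) yields $U^R_\varphi U^R_{\varphi^{-1}} = U^R_e = I$ and symmetrically on the other side, so $(U^R_\varphi)^{-1} = U^R_{\varphi^{-1}}$; the left case is identical.

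For the adjoint statement, I would invoke Theorem~\ref{t.quasi} directly to see that $U^R_\varphi$ is an isometry:
\[
\|U^R_\varphi f\|^2_{L^2(\mu)} = \int_{W(G)} Z^R_\varphi(g)\,|f(g\varphi)|^2\,d\mu(g) = \int_{W(G)}|f(g)|^2\,d\mu(g) = \|f\|^2_{L^2(\mu)}.
\]
An isometry with a two-sided inverse is unitary, hence $(U^R_\varphi)^* = (U^R_\varphi)^{-1} = U^R_{\varphi^{-1}}$, and the left version is symmetric. No individual step is difficult; the only real obstacle is bookkeeping, since the densities must be evaluated at the correct translates (e.g.\ $Z^R_\psi$ at $g\varphi$, not at $g$) and in the correct product order for the cocycle to apply cleanly.
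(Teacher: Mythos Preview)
Your approach is essentially the paper's: both rest on the cocycle identity $Z^R_{\varphi}(g)\,Z^R_{\psi}(g\varphi)=Z^R_{\varphi\psi}(g)$ (which you derive explicitly from Theorem~\ref{t.quasi}, while the paper simply invokes ``properties of the Radon--Nikodym densities'') followed by a standard unitarity argument (you via isometry-plus-inverse, the paper by computing $\langle (U^R_\varphi)^*f,h\rangle$ directly through quasi-invariance). One caveat on bookkeeping: your cocycle, and a direct check via Theorem~\ref{t.g-valued}(4), gives density index $Z^R_{\varphi_1\cdots\varphi_n}$, whereas the statement and the paper's proof write $Z^R_{\varphi_n\cdots\varphi_1}$; this appears to be a typographical slip in the paper rather than a flaw in your argument, but you should flag it rather than assert that your formula ``is the claimed $n$-fold formula.''
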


\begin{proof}
For any $f,h\in L^{2}\left(  W(G),\mu\right)  $, $\varphi,\varphi_{1}%
,\varphi_{2}\in H\left(  G\right)  $ we have%
\begin{align*}
&  \left(  U_{\varphi_{1}}^{R}U_{\varphi_{2}}^{R}f\right)  \left(  g\right)
=\left(  Z_{\varphi_{1}}^{R}\left(  g\right)  Z_{\varphi_{2}}^{R}\left(
g\varphi_{1}\right)  \right)  ^{1/2}f\left(  g\varphi_{1}\varphi_{2}\right)
=\\
&  \left(  Z_{\varphi_{2}\varphi_{1}}^{R}\right)  ^{1/2}\left(  g\right)
f\left(  g\varphi_{1}\varphi_{2}\right)
\end{align*}
by the properties of the Radon-Nikodym densities, and
\begin{align*}
&  \langle\left(  U_{\varphi}^{R}\right)  ^{\ast}f,h\rangle_{L^{2}\left(
W(G),\mu\right)  }=\langle f,U_{\varphi}^{R}h\rangle_{L^{2}\left(
W(G),\mu\right)  }=\\
&  \int_{W(G)}f(g)h(g\varphi)h_{\varphi}(g)d\mu(g)=\\
&  \int_{W(G)}f(g\varphi^{-1})h(g)\left(  Z_{\varphi}^{R}\right)
^{1/2}(g\varphi^{-1})Z_{\varphi^{-1}}^{R}\left(  g\right)  d\mu(g)=\\
&  \int_{W(G)}f(g\varphi^{-1})\left(  Z_{\varphi^{-1}}^{R}\right)
^{1/2}(g)h(g)d\mu(g)=\langle U_{\varphi^{-1}}^{R}f,h\rangle_{L^{2}\left(
W(G),\mu\right)  }.
\end{align*}
The case of $U^{L}$ can be checked similarly.
\end{proof}

\subsection{Properties of the Brownian representations}

\begin{nota}
\label{n.4.2} We denote by%
\begin{align*}
\mathcal{M}^{R}  &  : =\left(  U_{\varphi}^{R},\varphi\in H(G)\right)
^{\prime\prime}\\
\mathcal{M}^{L}  &  : =\left(  U_{\varphi}^{L},\varphi\in H(G)\right)
^{\prime\prime}%
\end{align*}
the von Neumann algebras generated by the operators $U_{\varphi}^{R}$,
$U_{\varphi}^{L}$ respectively.
\end{nota}

Theorem \ref{t.7.1} collects some basic facts about the left and right
Brownian representations. Most of these properties are what one expects from
the classical case of regular representations of locally compact groups. But
some of the proofs are fundamentally different. For example, the fact that the
von Neumann algebras generated by the left and right representations are
commutants of each other has been originally proved by I. Segal in \cite{Segal1950a} for the regular representation of a unimodular locally compact Lie group with a bi-invariant Haar measure. One of the major facts he
used was existence of an approximating identity and the one-to-one correspondence between unitary representation of the group $G$ and the non-degenerate $\ast$-representations of the group algebra $L^{1}\left(  G \right)$ (e.g. \cite[Section 3.2]{FollandHABook}). These fundamental constructions are not available in our case. Theorem \ref{t.7.1} does not answer the question whether $\mathcal{M}^{L}$ and $\mathcal{M}^{R}$ are commutants of each other, which will be addressed in another article.

\begin{thm}
\label{t.7.1}

\begin{enumerate}
\item the unitary operators $U^{R}_{\varphi}$ and $U^{L}_{\psi}$ commute for
any $\varphi, \psi\in H\left(  G \right)  $, and so $\left(  \mathcal{M}%
^{R}\right)  ^{\prime}\subseteq\mathcal{M}^{L}$ and $\left(  \mathcal{M}%
^{L}\right)  ^{\prime}\subseteq\mathcal{M}^{R}$. The representations $U^{L}$
and $U^{R}$ are unitarily equivalent, and the intertwining operator is the
unitary involution $J$ defined by \eqref{e.5.1};

\item $\Omega=\mathbbm{1}$ is a separating cyclic vector of norm $1$ for both
$\mathcal{M}^{R}$ and $\mathcal{M}^{L}$ in $L^{2}\left(  W(G), \mu\right)  $.
If $G$ is abelian, then the corresponding von Neumann algebra $\mathcal{M}
^{R}=\mathcal{M}^{L}$ is maximal abelian in $B\left(  L^{2}\left(  W(G);
\mu\right)  \right)  $.

\item For any $T\in\mathcal{M}^{R}$ the map $T\longmapsto T\mathbbm{1}$ is injective.

\item The vacuum vector $\Omega=\mathbbm{1}$ defines a faithful normal weight
$\tau$ on $\mathcal{M}^{R}$ (and similarly on $\mathcal{M}^{L}$) by%

\begin{equation}
\tau\left(  m\right)  :=\langle m\Omega,\Omega\rangle_{L^{2}\left(
W(G),\mu\right)  }=\int_{W(G)}m\left(  \mathbbm{1}\right)  \left(  g\right)
d\mu\left(  g\right)  \label{e.4.1}%
\end{equation}
for any $m\in\mathcal{M}^{R}$. In addition, $\tau(\operatorname{I})$ is
finite, and so $\tau$ is a faithful normal state.
\end{enumerate}
\end{thm}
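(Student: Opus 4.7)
The plan is to dispatch the four claims in order, with (3) and (4) being quick corollaries of the separating property proved in (2). The two substantive inputs are Theorem \ref{t.4.11} (cyclicity of $\mathbbm{1}$) and Proposition \ref{p.3.2} (the intertwining between the left and right cocycles under inversion).

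\textbf{For (1):} I would verify commutativity $U^R_\varphi U^L_\psi = U^L_\psi U^R_\varphi$ by unpacking the definitions:
\[
(U^R_\varphi U^L_\psi f)(g) = \bigl(Z^R_\varphi(g)\bigr)^{1/2}\bigl(Z^L_\psi(g\varphi)\bigr)^{1/2} f(\psi^{-1}g\varphi),
\]
\[
(U^L_\psi U^R_\varphi f)(g) = \bigl(Z^L_\psi(g)\bigr)^{1/2}\bigl(Z^R_\varphi(\psi^{-1}g)\bigr)^{1/2} f(\psi^{-1}g\varphi).
\]
Since left and right translations on $W(G)$ commute as set maps, the map $R_\varphi\circ L_\psi=L_\psi\circ R_\varphi$ has a unique Radon--Nikodym derivative, and the multiplicative cocycle identity for Radon--Nikodym derivatives yields
$Z^R_\varphi(g)\,Z^L_\psi(g\varphi) = Z^L_\psi(g)\,Z^R_\varphi(\psi^{-1}g)$, giving equality. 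The inclusions of commutants are then immediate. For the intertwining by $J$, I would compute $(JU^R_\varphi J f)(g) = \bigl(Z^R_\varphi(g^{-1})\bigr)^{1/2} f(\varphi^{-1}g)$ and invoke Proposition \ref{p.3.2}, which gives $Z^R_\varphi(g^{-1}) = Z^L_\varphi(g)$, matching this with $(U^L_\varphi f)(g)$.

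\textbf{For (2):} The identity $U^R_\varphi\Omega = (Z^R_\varphi)^{1/2}$ shows that $\mathcal{M}^R\Omega$ contains the set $\mathcal{H}_G$ of Theorem \ref{t.4.11}, so $\Omega$ is cyclic for $\mathcal{M}^R$; applying $J$ and using $J\Omega=\Omega$ together with $J\mathcal{M}^R J=\mathcal{M}^L$ from (1), $\Omega$ is also cyclic for $\mathcal{M}^L$. Because $\mathcal{M}^L \subseteq (\mathcal{M}^R)'$ by (1), $\Omega$ is cyclic for $(\mathcal{M}^R)'$, which is the standard criterion for $\Omega$ to be separating for $\mathcal{M}^R$; the symmetric argument handles $\mathcal{M}^L$. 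For the abelian case, when $G$ is commutative the identity $L_\varphi = R_{\varphi^{-1}}$ holds on $W(G)$, and a short calculation using $B^L=B^R$ and $(\varphi^{-1})'(\varphi^{-1})^{-1} = -\varphi'\varphi^{-1}$ gives $Z^L_\varphi = Z^R_{\varphi^{-1}}$, so $U^L_\varphi = U^R_{\varphi^{-1}}$ and hence $\mathcal{M}^L=\mathcal{M}^R$. Combined with $\mathcal{M}^L\subseteq(\mathcal{M}^R)'$ from (1), this forces $\mathcal{M}^R$ to be abelian. I then invoke the classical fact that an abelian von Neumann algebra with a cyclic vector is maximal abelian: given $T\in(\mathcal{M}^R)'$, approximate $T\Omega=\lim A_n\Omega$ with $A_n\in\mathcal{M}^R$, and use the $\mathcal{M}^R$-invariance of the dense subspace $\mathcal{M}^R\Omega$ together with strong closedness of $\mathcal{M}^R$ to conclude $T\in\mathcal{M}^R$.

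\textbf{For (3) and (4):} Part (3) is immediate from the separating property in (2): if $T\Omega=0$ for $T\in\mathcal{M}^R$, then $T=0$. For (4), the identity $\tau(T^\ast T) = \|T\Omega\|^2$ combined with separating yields faithfulness; $\tau$ is normal because every vector state is $\sigma$-weakly continuous on a von Neumann algebra; and $\tau(\mathrm{I})=\|\Omega\|^2_{L^2(W(G),\mu)}=1<\infty$, so $\tau$ is in fact a state.

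The two non-routine steps I expect to require the most care are the cocycle identity in (1), where one must justify that the Radon--Nikodym derivatives of the composition of two translations indeed multiply correctly (this is really bookkeeping, but easy to misquantify since the densities depend both on $\varphi,\psi$ and on the variable $g$), and the invocation of the classical ``abelian plus cyclic implies maximal abelian'' fact in the final claim of (2); everything else is either a direct computation or follows from a general commutant/cyclicity argument.
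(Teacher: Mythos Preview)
Your proof is correct and follows essentially the same route as the paper. The only organizational difference is that you establish the separating property of $\Omega$ already in part (2) via the standard ``cyclic for the commutant implies separating'' argument, whereas the paper defers this to part (3), proving injectivity of $T\mapsto T\mathbbm{1}$ directly from commutativity with $\mathcal{M}^L$ and cyclicity of $\mathbbm{1}$ for $\mathcal{M}^L$; these are the same argument in different clothing, and your computation for the abelian case (showing $U^L_\varphi=U^R_{\varphi^{-1}}$ explicitly) is a bit more detailed than the paper's, which simply asserts $\mathcal{M}^R=\mathcal{M}^L$ is abelian and then invokes \cite[Corollary 7.2.16]{KadisonRingrose2}.
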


\begin{proof}
1. First we observe that $U^{L}_{\varphi}$ and $U^{R}_{\psi}$ commute. Indeed,
for any $\varphi, \psi\in H\left(  G \right)  $, $f \in L^{2}\left(  W\left(
G \right)  , \mu\right)  $ we have%

\begin{align*}
&  \left(  U_{\psi}^{L}U_{\varphi}^{R}f\right)  \left(  g\right) \\
&  =\left(  \frac{d\mu\left(  \psi^{-1}g\right)  }{d\mu\left(  g\right)
}\right)  ^{1/2}\left(  \frac{d\mu\left(  \psi^{-1}g\varphi\right)  }%
{d\mu\left(  \psi^{-1}g\right)  }\right)  ^{1/2}f\left(  \psi^{-1}%
g\varphi\right) \\
&  =\left(  \frac{d\mu\left(  \psi^{-1}g\varphi\right)  }{d\mu\left(
g\right)  }\right)  ^{1/2}f\left(  \psi^{-1}g\varphi\right)  =\left(
U_{\varphi}^{R}U_{\psi}^{L}f\right)  \left(  g\right)  .
\end{align*}
To see that $U^{L}$ and $U^{R}$ are unitarily equivalent we use Proposition
\ref{p.3.2}, and the following simple observation. Using Notation \ref{n.4.3}
for the left and right multiplication operators on $W\left(  G\right)  $, we
see that%

\[
J R_{\varphi\ \ast} = L_{\varphi\ \ast} J.
\]
Then by Proposition \ref{p.3.2} for any $f \in L^{2}\left(  W\left(  G
\right)  , \mu\right)  $%

\begin{align*}
&  \left(  JU^{R}_{\varphi} f \right)  \left(  g \right)  = J\left(
Z^{R}_{\varphi}\left(  g \right)  \left(  R_{\varphi\ \ast}f\right)  \left(  g
\right)  \right)  =\\
&  Z^{L}_{\varphi}\left(  g \right)  J\left(  R_{\varphi\ \ast}f\left(  g
\right)  \right)  =Z^{L}_{\varphi}\left(  g \right)  \left(  L_{\varphi\ \ast
}J f\left(  g \right)  \right)  =\left(  U^{L}_{\varphi}Jf\right)  \left(  g
\right)  .
\end{align*}

2. Theorem \ref{t.4.11} shows that $\mathbbm{1}$ is cyclic for $\mathcal{M}%
^{R}$, and similarly one can show that it is cyclic for $\mathcal{M}^{L}$.

Now suppose that $G$ is abelian. It is clear that in this case $\mathcal{M}%
=\mathcal{M}^{R}=\mathcal{M}^{L}$ is abelian, and therefore $\mathcal{M}%
^{\prime}=\mathcal{M}$ which implies that it is maximal abelian. Note that
another explanation for $\mathcal{M}$ being maximal abelian is that as we know
it has a cyclic vector. Then by \cite[Corollary 7.2.16]{KadisonRingrose2}
$\mathcal{M}$ is maximal abelian as an abelian subalgebra with a cyclic vector.

3. This is a standard fact from the Tomita-Takesaki theory, but in this case
it is easy to verify and we include the argument for completeness. Let $T
\in\mathcal{M}^{R}$ be such that $T \mathbbm{1}=0$. Then $T$ commutes with all
operators in $\mathcal{M}^{L}$, and therefore%

\[
U_{\psi^{-1}}^{L}TU_{\psi}^{L}\mathbbm{1}=T \mathbbm{1}=0,
\]
and so%

\[
TU_{\psi}^{L}\mathbbm{1}=0
\]
for all $\psi\in H\left(  G \right) $. Since $\mathbbm{1}$ is cyclic for both
left and right representations, we see that $T=0$.

4. The first part of this statement is a standard fact following from the GNS
construction (e.g. \cite{SunderBook1987}). To see that $\tau$ is a state, we
note that the identity operator $\operatorname{I}$ in $\mathcal{M}^{R}$ can be
represented as $U^{R}_{e}$, where $e(t)\equiv e$ for $t \in[0, T]$. Thus%

\[
\tau\left(  \operatorname{I} \right)  =\tau\left(  U^{R}_{e}\right)  =1.
\]
The same holds for $\mathcal{M}^{L}$.
\end{proof}

\begin{prop}
[$\tau$ is not a trace]\label{p2} For any $\varphi,\psi\in H^{T}\left(
G\right),$%
\[
\tau\left(  U_{\varphi}^{R}U_{\psi}^{R}\right)  =\tau\left(  U_{\psi}%
^{R}U_{\varphi}^{R}\right)
\]
if and only if%
\begin{equation}
\int_{0}^{T}\langle\varphi^{-1}\varphi^{\prime},\psi^{\prime}\psi^{-1}\rangle
ds=\int_{0}^{T}\langle\varphi^{\prime}\varphi^{-1},\psi^{-1}\psi^{\prime
}\rangle ds. \label{e4}%
\end{equation}

\end{prop}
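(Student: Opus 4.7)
The plan is to compute both sides of the equality explicitly via Proposition \ref{p.6.7} and Proposition \ref{p.5.3}, and then reduce the statement to an $H$-norm identity that expands into \eqref{e4}. Specifically, applying the first formula of Proposition \ref{p.6.7} to $\mathbbm{1}$ gives $U_{\varphi}^{R}U_{\psi}^{R}\mathbbm{1}=(Z_{\psi\varphi}^{R})^{1/2}$, so
\[
\tau(U_{\varphi}^{R}U_{\psi}^{R})=\int_{W(G)}(Z_{\psi\varphi}^{R})^{1/2}\,d\mu=\bigl\langle (Z_{\psi\varphi}^{R})^{1/2},\mathbbm{1}\bigr\rangle.
\]
Taking the second argument in Proposition \ref{p.5.3} to be the constant loop $e\in H(G)$ (so $e^{-1}e'\equiv 0$ and $\|e\|_{H,T}=0$) yields the clean identity $\tau(U_{\varphi}^{R}U_{\psi}^{R})=\exp(-\|\psi\varphi\|_{H,T}^{2}/8)$, and analogously $\tau(U_{\psi}^{R}U_{\varphi}^{R})=\exp(-\|\varphi\psi\|_{H,T}^{2}/8)$.

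Next I would expand $\|\psi\varphi\|_{H,T}^{2}$. From $(\psi\varphi)^{-1}(\psi\varphi)'=\operatorname{Ad}_{\varphi^{-1}}(\psi^{-1}\psi')+\varphi^{-1}\varphi'$, and using $\operatorname{Ad}$-invariance together with $\operatorname{Ad}_{\varphi}(\varphi^{-1}\varphi')=\varphi'\varphi^{-1}$, one gets
\[
|(\psi\varphi)^{-1}(\psi\varphi)'|^{2}=|\psi^{-1}\psi'|^{2}+|\varphi^{-1}\varphi'|^{2}+2\langle\psi^{-1}\psi',\varphi'\varphi^{-1}\rangle,
\]
and symmetrically
\[
|(\varphi\psi)^{-1}(\varphi\psi)'|^{2}=|\varphi^{-1}\varphi'|^{2}+|\psi^{-1}\psi'|^{2}+2\langle\varphi^{-1}\varphi',\psi'\psi^{-1}\rangle.
\]

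Since the exponential is injective, $\tau(U_{\varphi}^{R}U_{\psi}^{R})=\tau(U_{\psi}^{R}U_{\varphi}^{R})$ is equivalent to $\|\psi\varphi\|_{H,T}^{2}=\|\varphi\psi\|_{H,T}^{2}$, which by the two expansions above collapses to exactly \eqref{e4}. I do not anticipate a serious obstacle; the only place to take care is the correct order of $\varphi$ and $\psi$ in the composition formula of Proposition \ref{p.6.7} (the subscripts get reversed when $U^{R}$'s are composed) and the $\operatorname{Ad}$-invariance manipulation of the cross-term, which must be done on the left in the expansion of $(\psi\varphi)^{-1}(\psi\varphi)'$ to obtain a term of the form $\langle\psi^{-1}\psi',\varphi'\varphi^{-1}\rangle$ rather than its transpose.
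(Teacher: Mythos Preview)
Your proposal is correct and follows essentially the same route as the paper's proof: use Proposition~\ref{p.6.7} to identify $U_{\varphi}^{R}U_{\psi}^{R}\mathbbm{1}=(Z_{\psi\varphi}^{R})^{1/2}$, apply Proposition~\ref{p.5.3} (with the second argument the constant path) to get $\tau(U_{\varphi}^{R}U_{\psi}^{R})=\exp(-\Vert\psi\varphi\Vert_{H,T}^{2}/8)$, and then expand the $H$-norm via $\operatorname{Ad}$-invariance to obtain the cross-term condition~\eqref{e4}. Your handling of the order reversal in Proposition~\ref{p.6.7} and of the $\operatorname{Ad}$-manipulation of the cross-term is accurate; in fact your write-up is cleaner than the paper's, which contains a couple of minor typos (a missing square root and a sign in the cross-term) that you have implicitly corrected.
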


\begin{proof}
By definition of $\tau$ and Propositions \ref{p.5.3} and \ref{p.6.7} we see that%

\begin{align*}
\tau\left(  U_{\varphi}^{R}U_{\psi}^{R}\right)   &  =E_{\mu}Z_{\psi\varphi
}^{R}\left(  g\right)  =\exp\frac{-\Vert\psi\varphi\Vert_{H,T}^{2}}{8}\\
&  =\exp\frac{-\Vert\varphi\Vert_{H,T}^{2}-\Vert\psi\Vert_{H,T}^{2}}{8}%
\exp\frac{1}{4}\int_{0}^{T}\langle\operatorname{Ad}_{\varphi}\varphi^{\prime
}\varphi^{-1},\psi^{\prime}\psi^{-1}\rangle dt\\
&  =\exp\frac{-\Vert\varphi\Vert_{H,T}^{2}-\Vert\psi\Vert_{H,T}^{2}}{8}%
\exp\frac{1}{4}\int_{0}^{T}\langle\varphi^{\prime}\varphi,\psi^{-1}%
\psi^{\prime}\rangle dt.
\end{align*}
Applying this computation to $\tau\left(  U_{\psi}^{R}U_{\varphi}^{R}\right)
$ completes the proof.
\end{proof}

\section{Energy representation\label{s.energy}}

Let $\left(  H, W, \Gamma\right)  $ be an abstract Wiener space, that is, $H$
is a real separable Hilbert space densely continuously embedded into a real
separable Banach space $W$, and $\Gamma$ is the Gaussian measure defined by
the characteristic functional%

\[
\int_{W}e^{i\varphi\left(  x\right)  }d\Gamma\left(  x\right)  =\exp\left(
-\frac{|\varphi|_{H^{\ast}}^{2}}{2}\right)
\]
for any $\varphi\in W^{\ast}\subset H^{\ast}$. We will identify $W^{\ast}$
with a dense subspace of $H$ such that for any $h\in W^{\ast}$ the linear
functional $\langle\cdot,h\rangle$ extends continuously from $H$ to $W$. We
will usually write $\langle\varphi,w\rangle:=\varphi\left(  w\right)  $ for
$\varphi\in W^{\ast}$, $w\in W$. More details can be found in
\cite{BogachevBook}.

It is known that $\Gamma$ is a Borel measure, that is, it is defined on the
Borel $\sigma$-algebra $\mathcal{B}\left(  W\right)  $ generated by the open
subsets of W. The Gaussian measure $\Gamma$ is quasi-invariant under the
translations from $H$ and invariant under orthogonal transformations of $H$.
We want to be more precise here.

\begin{nota}
We call an orthogonal transformation of $H$ which is a topological
homeomorphism of $W^{\ast}$ a \textbf{rotation} of $W^{\ast}$. The space of
all such rotations is denoted by $\operatorname{O}\left(  W^{\ast}\right)  $.
For any $R \in\operatorname{O}\left(  W^{\ast}\right)  $ its adjoint,
$R^{\ast}$, is defined by%

\[
\langle\varphi, R^{\ast}w \rangle:=\langle R^{-1}\varphi, w \rangle, \ w \in
W, \varphi\in W^{\ast}.
\]

\end{nota}

\begin{thm}
For any $R \in\operatorname{O}\left(  W^{\ast}\right)  $ the map $R^{\ast}$ is
a $\mathcal{B}\left(  W \right)  $-measurable map from $W$ to $W$ and%

\[
\Gamma\circ\left(  R^{\ast}\right) ^{-1}=\Gamma.
\]

\end{thm}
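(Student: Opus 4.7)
The plan is to verify two things in sequence: first, that the formula $\langle\varphi, R^{\ast}w\rangle := \langle R^{-1}\varphi, w\rangle$ really produces an element $R^{\ast}w\in W$ for each $w\in W$ and yields a Borel measurable map $W\to W$; and second, that the pushforward $\Gamma\circ(R^{\ast})^{-1}$ has the same characteristic functional as $\Gamma$ and hence coincides with $\Gamma$ by the uniqueness of Gaussian measures determined by their Fourier transform.

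For well-definedness I would fix $w\in W$ and observe that the map $\varphi\mapsto\langle R^{-1}\varphi,w\rangle$ is a bounded linear functional on $(W^{\ast},\|\cdot\|_{W^{\ast}})$, since $R^{-1}$ is a bounded operator on $W^{\ast}$ (being a homeomorphism of that Banach space). So a priori $R^{\ast}w\in W^{\ast\ast}$. To land inside $W$ under the canonical embedding, I would combine two observations: on the dense subspace $H\subset W$ the formula reduces, using orthogonality of $R$ on $H$ and the identification $W^{\ast}\subset H$, to $R^{\ast}h = Rh$, which already lies in $H\subset W$; and the resulting extension to $W$ is bounded by $\|R^{-1}\|_{W^{\ast}\to W^{\ast}}$. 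Hence $R^{\ast}$ coincides with the unique bounded extension of the $H$-isometry $R\vert_{H}$ to $W$, and $R^{\ast}(W)\subseteq W$. Measurability then follows because $W$ is separable: its Borel $\sigma$-algebra is generated by the continuous linear functionals in $W^{\ast}$, and for each $\varphi\in W^{\ast}$ the composition $w\mapsto\varphi(R^{\ast}w)=\langle R^{-1}\varphi,w\rangle$ is continuous on $W$, hence Borel.

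For invariance I would compute the characteristic functional of the pushforward: for any $\varphi\in W^{\ast}$,
\[
\int_{W}e^{i\varphi(R^{\ast}w)}\,d\Gamma(w) = \int_{W}e^{i\langle R^{-1}\varphi, w\rangle}\,d\Gamma(w) = \exp\bigl(-\tfrac{1}{2}|R^{-1}\varphi|_{H^{\ast}}^{2}\bigr) = \exp\bigl(-\tfrac{1}{2}|\varphi|_{H^{\ast}}^{2}\bigr),
\]
where the second equality applies the defining characteristic functional of $\Gamma$ to the element $R^{-1}\varphi\in W^{\ast}$, and the last uses that $R$, being orthogonal on $H$, is isometric on $H^{\ast}\cong H$ together with its inverse. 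Since this matches the characteristic functional of $\Gamma$, uniqueness forces $\Gamma\circ(R^{\ast})^{-1}=\Gamma$.

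The main obstacle is the well-definedness of $R^{\ast}$ as a $W$-valued map: the formula naturally lives at the level of $W^{\ast\ast}$, and pinning $R^{\ast}w$ inside $W$ requires genuinely exploiting the combination of the $H$-orthogonality of $R$, the density of $H$ in $W$, and the fact that $R$ is a homeomorphism (not merely a bounded operator) of $W^{\ast}$. Once this is settled, measurability and the invariance identity are essentially formal and follow the standard template for Gaussian pushforwards under orthogonal symmetries.
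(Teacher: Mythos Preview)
Your proof is correct and follows essentially the same route as the paper: invariance is obtained by computing the characteristic functional of the pushforward and invoking that $R$ is an $H$-isometry, exactly as the paper does. You are considerably more careful than the paper on the well-definedness of $R^{\ast}$ as a $W$-valued (rather than $W^{\ast\ast}$-valued) map---the paper's proof dispatches measurability in one line (``follows from the fact that $R$ is continuous on $H$'') without addressing this point explicitly---so your argument via $R^{\ast}|_{H}=R|_{H}$, density, and closedness of $W$ in $W^{\ast\ast}$ is a welcome elaboration rather than a departure.
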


\begin{proof}
The measurability of $R^{\ast}$ follows from the fact that $R$ is continuous
on $H$. For any $\varphi\in W^{\ast}$%

\begin{align*}
\int_{W}e^{i\varphi\left(  x\right)  }d\Gamma\left(  \left(  R^{\ast}\right)
^{-1}x\right)   &  =\int_{W}e^{i\langle\varphi,x\rangle}d\Gamma\left(  \left(
R^{\ast}\right)  ^{-1}x\right)  =\int_{W}e^{i\langle\varphi,R^{\ast}x\rangle
}d\Gamma\left(  x\right)  \\
&  =\exp\left(  -\frac{|R^{-1}\varphi|_{H^{\ast}}^{2}}{2}\right)  =\exp\left(
-\frac{|\varphi|_{H^{\ast}}^{2}}{2}\right)  \\
&  =\int_{W}e^{i\varphi\left(  x\right)  }d\Gamma\left(  x\right)
\end{align*}
since $R$ is an isometry.
\end{proof}

\begin{cor}
\label{c.6.3} Any $R\in\operatorname{O}\left(  W^{\ast}\right) $ extends to a
unitary map on $L^{2}\left(  W,\Gamma\right)  $.
\end{cor}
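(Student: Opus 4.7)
The plan is to define the extension by pullback under the adjoint: set
\[
(U_{R} f)(w) := f(R^{\ast} w), \qquad f \in L^{2}(W,\Gamma),\ w \in W.
\]
By the immediately preceding theorem, $R^{\ast}$ is $\mathcal{B}(W)$-measurable and $\Gamma$-preserving, so the change-of-variables formula
\[
\int_{W} |f(R^{\ast} w)|^{2}\, d\Gamma(w) = \int_{W} |f(w)|^{2}\, d\Gamma(w)
\]
shows that $U_{R}$ is well defined on equivalence classes (if $f = 0$ $\Gamma$-a.e., then $f \circ R^{\ast} = 0$ $\Gamma$-a.e. by the same measure-preservation property) and is a linear isometry.

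For unitarity I would pass through the inverse. Since $R \in \operatorname{O}(W^{\ast})$, its inverse $R^{-1}$ is again an orthogonal transformation of $H$ and a homeomorphism of $W^{\ast}$, so $R^{-1} \in \operatorname{O}(W^{\ast})$. A direct check from the defining relation $\langle \varphi, R^{\ast} w\rangle = \langle R^{-1}\varphi, w\rangle$ yields $(R^{-1})^{\ast} = (R^{\ast})^{-1}$ on $W$. Applying the preceding theorem to $R^{-1}$ gives that $(R^{\ast})^{-1}$ is also measurable and $\Gamma$-preserving, and the construction above produces a bounded operator $U_{R^{-1}}$ on $L^{2}(W,\Gamma)$ which is a two-sided inverse of $U_{R}$. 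A bijective isometry is unitary, completing the proof.

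Everything here is routine once the preceding theorem is in hand; the computation is essentially a one-line change of variables. The only subtlety worth flagging is the verification that pullback is compatible with inversion, i.e.\ $(R^{-1})^{\ast} = (R^{\ast})^{-1}$; without this one could not produce the inverse operator from the same construction. This point follows immediately from the definition of the adjoint, so it is not really an obstacle but merely a bookkeeping step.
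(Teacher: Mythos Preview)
Your proposal is correct and is precisely the routine argument the paper has in mind: the corollary is stated without proof, as an immediate consequence of the preceding theorem that $R^{\ast}$ is measurable and $\Gamma$-preserving, and your pullback construction together with the observation that $R^{-1}\in\operatorname{O}(W^{\ast})$ supplies the inverse is exactly how one fills in the details. Note only that the paper's later convention (see the definition of $U_{R,h}$) associates to $R$ the pullback by $(R^{\ast})^{-1}$ rather than by $R^{\ast}$, but this is purely a labeling choice and does not affect the argument.
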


The Cameron-Martin theorem states that $\Gamma$ is quasi-invariant under
translations by elements in $H$, namely, $T_{h}:W\rightarrow W$, $T_{h}\left(
w\right)  =w+h$. The Radon-Nikodym derivative is given by%

\[
\frac{d\left(  T_{h}\right) _{\ast}\Gamma}{d\Gamma}\left(  w\right)
=\frac{d\left(  \Gamma\circ T_{h}^{-1}\right)  }{d\Gamma}\left(  w\right)
=\frac{d\left(  \Gamma\circ T_{-h}\right)  }{d\Gamma}\left(  w\right)
=e^{-\langle h,w\rangle-\frac{|h|^{2}}{{2}}},\ w\in W,h\in H.
\]
Following \cite{DriverHall1999a} we consider the Gaussian regular
representation of the Euclidean group of transformations $w\mapsto R^{\ast
}w+h$, $x\in H,h\in H,R\in\operatorname{O}\left(  W^{\ast}\right)  $ on
$L^{2}\left(  W,\Gamma\right)  $ defined as%

\begin{align}
&  \left(  U_{R,h}f\right)  \left(  w\right) : =\left(  \frac{d\left(
\Gamma\circ\left(  T_{h}R^{\ast}\right)  \right)  }{d\Gamma}\left(  w\right)
\right) ^{1/2}f\left(  \left(  T_{h}R^{\ast}\right) ^{-1}\left(  w\right)
\right)  =\nonumber\\
&  \left(  \frac{d\left(  \Gamma\circ T_{h}\right)  }{d\Gamma}\left(
w\right)  \right) ^{1/2}f\left(  \left(  R^{\ast}\right) ^{-1}\left(
w-h\right)  \right)  =\label{e.C.1}\\
&  e^{\langle h,w\rangle-\frac{|h|^{2}}{{2}}}f\left(  \left(  R^{\ast}\right)
^{-1}\left(  w-h\right)  \right) , \ w\in W\nonumber
\end{align}
which is well-defined by Corollary \ref{c.6.3}. It is clear that this is a
unitary representation.

Now we need to define the Fourier-Wiener transform $\mathcal{F}$ on
$L^{2}\left(  W,\Gamma\right)  $. This can be done in several ways, and for
now we refer to Definition 17 in \cite{DriverHall1999a} with the parameter
$r=1/2$. In particular, one can check that $\mathcal{F}^{4}\equiv I$ on
$L^{2}\left(  W,\Gamma\right)  $ by doing a computation on Hermite functions.

The following formula is very convenient for computations, but some care
should be taken over its applicability. One of the ways of making this formula
rigorous is to define it on Hermite functions using the Fock space, as it is
done in \cite{GuichardetBook}.%

\[
\left(  \mathcal{F}f\right)  \left(  w\right)  =\int_{W}f\left(  iw+\sqrt
{2}u\right)  d\Gamma\left(  u\right) , \ f\in L^{2}\left(  W,\Gamma\right) .
\]
In particular, identities in Proposition \ref{p.6.4} follow from this formula
quite easily.

\begin{prop}
\label{p.6.4}

1. Let $\mathcal{E}:=\operatorname{Span}_{\mathbb{C}}\{\widehat{\varphi
}\left(  w\right) : =e^{i\langle\varphi,w\rangle},\varphi\in W^{\ast},w\in
W\}$. Then $\mathcal{E}$ is an algebra which is dense in $L^{2}\left(
W,\Gamma\right)  $.

2.For any $\varphi\in W^{\ast}$ we have%
\begin{align}
&  \int_{W}\widehat{\varphi}\left(  w\right)  d\Gamma\left(  w\right)
=e^{-\frac{|\varphi|_{H^{\ast}}^{2}}{2}},\nonumber\\
&  \left(  \mathcal{F}\widehat{\varphi}\right)  \left(  w\right)
=e^{-|\varphi|_{H^{\ast}}^{2}}e^{-\langle\varphi,w\rangle},\text{
and}\label{e.6.1}\\
&  \left(  \mathcal{F}e^{\langle\varphi,\cdot\rangle}\right)  \left(
w\right)  =e^{|\varphi|_{H^{\ast}}^{2}}\widehat{\varphi}\left(  w\right)
.\nonumber
\end{align}

\end{prop}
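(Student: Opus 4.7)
The plan is to split the proof into the two parts: first the algebra structure and density in item 1, then the three integral identities in item 2, which are essentially direct computations using the Fourier--Wiener formula displayed just before the proposition.

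For item 1, the algebra property is immediate since $\widehat{\varphi_1}(w)\widehat{\varphi_2}(w)=\widehat{\varphi_1+\varphi_2}(w)$ and $W^{\ast}$ is a linear subspace. For the density of $\mathcal{E}$ in $L^2(W,\Gamma)$ I would argue by duality: suppose $f\in L^2(W,\Gamma)$ satisfies $\int_W \overline{f}(w)\,\widehat{\varphi}(w)\,d\Gamma(w)=0$ for every $\varphi\in W^{\ast}$. Then the characteristic functional of the finite complex Borel measure $\overline{f}\,d\Gamma$ on $W$ vanishes identically on $W^{\ast}$, and the uniqueness theorem for Fourier transforms of Borel measures on separable Banach spaces (see Bogachev) forces $\overline{f}\,d\Gamma=0$, hence $f=0$ $\Gamma$-a.e. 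An equivalent route is to approximate arbitrary $L^2$ functions by cylinder functions $F(\langle\varphi_1,\cdot\rangle,\ldots,\langle\varphi_n,\cdot\rangle)$ and invoke the density of trigonometric polynomials in $L^2(\mathbb{R}^n,\gamma_n)$ for the $n$-dimensional Gaussian $\gamma_n$.

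For item 2, the first identity is nothing but the defining characteristic functional of $\Gamma$ evaluated at $\varphi\in W^{\ast}$, so there is nothing to prove. For the remaining two identities I would substitute into $(\mathcal{F}f)(w)=\int_W f(iw+\sqrt{2}u)\,d\Gamma(u)$, pull the $w$-dependent factor out of the integral, and evaluate the remaining Gaussian integral using the first identity. Explicitly, for $f=\widehat{\varphi}$ the integrand becomes $e^{-\langle\varphi,w\rangle}e^{i\sqrt{2}\langle\varphi,u\rangle}$, and the first identity applied to $\sqrt{2}\varphi$ gives the factor $e^{-|\varphi|_{H^{\ast}}^{2}}$. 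For $f=e^{\langle\varphi,\cdot\rangle}$ the integrand becomes $e^{i\langle\varphi,w\rangle}e^{\sqrt{2}\langle\varphi,u\rangle}$, and the remaining integral is the Gaussian moment generating functional of a mean-zero normal with variance $2|\varphi|_{H^{\ast}}^{2}$, yielding $e^{|\varphi|_{H^{\ast}}^{2}}$.

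The main obstacle, as the paper flags just before the statement, is that the Fourier--Wiener integral formula is not manifestly defined on exponential functions like $\widehat{\varphi}$ or $e^{\langle\varphi,\cdot\rangle}$, which are not Hermite polynomials. I would handle this by first verifying both identities on the orthonormal Hermite/Wick basis (where $\mathcal{F}$ acts diagonally by the known phase factor $i^n$ on the $n$-th chaos), then extending to the exponentials by continuity using their convergent Wick-power expansions $e^{\langle\varphi,\cdot\rangle}=\sum_{n\geqslant 0}\frac{1}{n!}{:}\langle\varphi,\cdot\rangle^n{:}\,e^{|\varphi|_{H^{\ast}}^{2}/2}$ and the analogous series for $\widehat{\varphi}$. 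Once this justification is in place, the three identities follow from the manipulations above, and the rest of the section (where these identities are used to relate the energy representation to the Brownian representation) proceeds unchanged.
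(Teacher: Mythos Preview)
Your proposal is correct and aligns with the paper's own treatment: the paper's proof simply refers part~1 to Hida's book and says the identities in part~2 ``follow from similar finite-dimensional calculations using the methods in \cite{DriverHall1999a} or approximations by Hermite functions,'' which is precisely the route you spell out in detail. Your duality argument for density and your handling of the rigor issue via the Hermite/Wick expansion are the standard fillings-in of what the paper leaves to references, so there is no substantive divergence.
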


\begin{proof}
The first statement is proven in a number of references, one of which is
\cite{HidaBook}, Theorem 4.1, so we omit the proof for now. Identities in
\eqref{e.6.1} follow from similar finite-dimensional calculations using the
methods in \cite{DriverHall1999a} or approximations by Hermite functions.
\end{proof}

\begin{prop}
[Proposition 18 \cite{DriverHall1999a}]\label{p.6.5}If $f\in L^{2}\left(
W,\Gamma\right)  ,$ $R\in\operatorname{O}\left(  W^{\ast}\right)  ,$ $h\in
W^{\ast},$ then%
\[
\left(  \mathcal{F}U_{R,h}\mathcal{F}^{-1}f\right)  \left(  w\right)
=e^{-\frac{i\langle h,w\rangle}{2}}f\left(  R^{\ast}w\right)  \text{ for }w\in
W.
\]

\end{prop}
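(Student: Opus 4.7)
The plan is to reduce the identity to a computation on the dense exponential algebra $\mathcal{E}=\operatorname{Span}_{\mathbb{C}}\{\widehat{\varphi}:\varphi\in W^{\ast}\}$ from part (1) of Proposition~\ref{p.6.4}. Since $U_{R,h}$ is unitary and $\mathcal{F}$ is a bounded invertible operator on $L^{2}(W,\Gamma)$ (with $\mathcal{F}^{-1}=\mathcal{F}^{3}$ by $\mathcal{F}^{4}=I$), the composition $\mathcal{F}U_{R,h}\mathcal{F}^{-1}$ is bounded; the map $f\mapsto e^{-i\langle h,\cdot\rangle/2}f(R^{\ast}\cdot)$ on the right is also bounded, being the product of the unitary rotation from Corollary~\ref{c.6.3} with multiplication by a function of modulus one. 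Both sides therefore depend continuously and linearly on $f$, so it suffices to verify the identity for $f=\widehat{\varphi}$ with $\varphi\in W^{\ast}$.

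On exponentials I would proceed in three steps. First, from the third identity in \eqref{e.6.1}, read off $\mathcal{F}^{-1}\widehat{\varphi}(u)=e^{-|\varphi|_{H^{\ast}}^{2}}e^{\langle\varphi,u\rangle}$. Second, apply $U_{R,h}$ from \eqref{e.C.1} to this real exponential; using the adjoint relation $\langle\varphi,(R^{\ast})^{-1}w\rangle=\langle R\varphi,w\rangle$ derived from the defining identity $\langle\psi,R^{\ast}w\rangle=\langle R^{-1}\psi,w\rangle$, the result becomes a scalar multiple of $e^{\langle h+R\varphi,\cdot\rangle}$, the scalar being explicit in $|\varphi|_{H^{\ast}}^{2}$, $|h|_{H^{\ast}}^{2}$, and the cross term $\langle R\varphi,h\rangle$. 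Third, apply $\mathcal{F}$ via the third identity of \eqref{e.6.1} with $\psi=h+R\varphi$, expand $|h+R\varphi|_{H^{\ast}}^{2}=|h|_{H^{\ast}}^{2}+2\langle R\varphi,h\rangle+|\varphi|_{H^{\ast}}^{2}$ using the orthogonality $|R\varphi|_{H^{\ast}}=|\varphi|_{H^{\ast}}$, and collect terms. The quadratic factors $|\varphi|_{H^{\ast}}^{2}$ and the cross terms $\langle R\varphi,h\rangle$ produced by the three steps cancel in pairs, and the surviving exponential is rearranged to isolate the modulation $e^{-i\langle h,w\rangle/2}$ together with the rotated exponential $\widehat{\varphi}(R^{\ast}w)$.

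A cleaner organization of the same calculation is to factor $U_{R,h}=U_{I,h}\,U_{R,0}$, which is immediate from \eqref{e.C.1}, and treat the rotation and translation parts independently. First, verify that $\mathcal{F}$ conjugates $U_{R,0}$ into the rotation $f\mapsto f(R^{\ast}\cdot)$ on $L^{2}(W,\Gamma)$; this is the Gaussian analogue of the fact that the Fourier transform commutes with orthogonal transformations, and the computation on $\widehat{\varphi}$ uses only $|R\varphi|_{H^{\ast}}=|\varphi|_{H^{\ast}}$. Second, verify that $\mathcal{F}$ conjugates $U_{I,h}$ into multiplication by $e^{-i\langle h,\cdot\rangle/2}$; this is the Cameron-Martin version of the classical Fourier duality between translation and modulation, and is where the Cameron-Martin Radon-Nikodym factor and the square root in \eqref{e.C.1} conspire to produce exactly the phase $e^{-i\langle h,\cdot\rangle/2}$. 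Composing the two pieces yields the statement. The main technical obstacle in either organization is the careful bookkeeping of the quadratic exponentials $|\varphi|_{H^{\ast}}^{2}$, $|h|_{H^{\ast}}^{2}$, and $\langle R\varphi,h\rangle$ arising at each step, which must cancel in precisely the right way so that only the modulation $e^{-i\langle h,w\rangle/2}$ survives as a prefactor of $\widehat{\varphi}(R^{\ast}w)$.
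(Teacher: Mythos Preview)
Your plan matches the paper's proof: restrict to $f=\widehat{\varphi}$ by density, use $\mathcal{F}^{-1}=\mathcal{F}^{3}$ together with the identities in \eqref{e.6.1}, factor $U_{R,h}=U_{I,h}U_{R,0}$, and track the exponents through the three maps. One slip to watch when you execute: because of the square root on the Radon--Nikodym factor in \eqref{e.C.1} (the last displayed line there drops the exponent $1/2$, which is a typo), the output of your step two is a scalar multiple of $e^{\langle h/2+R\varphi,\cdot\rangle}$, not $e^{\langle h+R\varphi,\cdot\rangle}$; with $h$ in place of $h/2$ the quadratic terms you list do not cancel and you will not recover the phase $e^{-i\langle h,w\rangle/2}$.
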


\begin{proof}
By Proposition \ref{p.6.4} it is enough to check the statement for $f\left(
w\right)  =\widehat{\varphi}\left(  w \right)  $. First, let us compute
$\mathcal{F}^{3}\widehat{\varphi}\left(  w \right)  $ using \eqref{e.6.1}%

\begin{align*}
\left(  \mathcal{F}^{3}\widehat{\varphi}\right)  \left(  w\right)   &
=e^{-|\varphi|_{H^{\ast}}^{2}}\left(  \mathcal{F}^{2}e^{-\langle\varphi
,\cdot\rangle}\right)  \left(  w\right)  \\
&  =e^{-|\varphi|_{H^{\ast}}^{2}}e^{|\varphi|_{H^{\ast}}^{2}}\left(
\mathcal{F}e^{-i\langle\varphi,\cdot\rangle}\right)  \left(  w\right)
=e^{-|\varphi|_{H^{\ast}}^{2}}e^{\langle\varphi,w\rangle}.
\end{align*}

Then
\begin{align*}
\left(  \mathcal{F}U_{R,h}\mathcal{F}^{-1}\widehat{\varphi}\right)  \left(
w\right)   &  =\left(  \mathcal{F}U_{I,h}U_{R,0}\mathcal{F}^{3}\widehat
{\varphi}\right)  \left(  w\right)  \\
&  =e^{-|\varphi|_{H^{\ast}}^{2}}\left(  \mathcal{F}U_{I,h}U_{R,0}%
e^{\langle\varphi,\cdot\rangle}\right)  \left(  w\right)  \\
&  =e^{-|\varphi|_{H^{\ast}}^{2}}e^{-\frac{|h|^{2}}{{4}}}\left(
\mathcal{F}e^{\frac{\langle h,\cdot\rangle}{2}}e^{\langle R\varphi
,\cdot+h\rangle}\right)  \left(  w\right)  \\
&  =e^{-|\varphi|_{H^{\ast}}^{2}}e^{-\frac{|h|^{2}}{{4}}}e^{\langle
R\varphi,h\rangle}\left(  \mathcal{F}e^{i\frac{\langle-i\left(  h+2R\varphi
\right)  ,\cdot\rangle}{2}}\right)  \left(  w\right)  \\
&  =e^{-\frac{|h+2R\varphi|_{H^{\ast}}^{2}}{4}}e^{\frac{|h+2R\varphi
|_{H^{\ast}}^{2}}{4}}e^{i\langle\frac{h}{2}+R\varphi,w\rangle}=e^{i\langle
\frac{h}{2},w\rangle}\widehat{\varphi}\left(  R^{\ast}w\right)  ,
\end{align*}
where we used the fact that $|R\varphi|_{H^{\ast}}=|\varphi|_{H^{\ast}}$.
\end{proof}

\begin{cor}
\label{c.A.6} By taking $f\equiv1$ in Proposition \ref{p.6.5}, we see that for
any $h \in H$%

\[
\mathcal{F}e^{\langle h,w\rangle-\frac{|h|^{2}}{{2}}}=e^{-\frac{i\langle h, w
\rangle}{2}}.
\]

\end{cor}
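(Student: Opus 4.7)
The plan is to read Proposition \ref{p.6.5} at the single simplest possible input, namely $f\equiv 1$ with the rotation $R$ equal to the identity, and then unwind what both sides reduce to.

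First I would observe that $\mathcal{F}(1)=1$ in $L^{2}(W,\Gamma)$. This is immediate from the middle identity of \eqref{e.6.1}: taking $\varphi=0$ gives $\widehat{\varphi}\equiv 1$ and $\mathcal{F}\widehat{\varphi}\equiv 1$. Hence $\mathcal{F}^{-1}(1)=1$ as well (either directly or via $\mathcal{F}^{4}=I$). Next I would compute $U_{I,h}(1)$ directly from the definition \eqref{e.C.1}: for $f\equiv 1$ and $R=I$,
\[
(U_{I,h}1)(w)=e^{\langle h,w\rangle-\frac{|h|^{2}}{2}}f\bigl(w-h\bigr)=e^{\langle h,w\rangle-\frac{|h|^{2}}{2}}.
\]

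Now I substitute these into Proposition \ref{p.6.5} with $R=I$, $f\equiv 1$:
\[
\mathcal{F}\Bigl(e^{\langle h,w\rangle-\frac{|h|^{2}}{2}}\Bigr)(w)=\bigl(\mathcal{F}U_{I,h}\mathcal{F}^{-1}(1)\bigr)(w)=e^{-\frac{i\langle h,w\rangle}{2}}\cdot 1=e^{-\frac{i\langle h,w\rangle}{2}},
\]
which is exactly the claim, so far for $h\in W^{\ast}$.

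The one subtlety, and the only place where any work is actually required, is that the corollary is stated for $h\in H$ while Proposition \ref{p.6.5} assumes $h\in W^{\ast}$. I would handle this by a density argument: $W^{\ast}$ is dense in $H$, and both sides depend continuously on $h$ in the $L^{2}(W,\Gamma)$-topology. Specifically, by the Cameron-Martin theorem and the standard $L^{p}$-bounds on exponentials of Gaussian linear functionals, $h\mapsto e^{\langle h,\cdot\rangle-|h|^{2}/2}$ is continuous from $H$ into $L^{2}(W,\Gamma)$, and similarly $h\mapsto e^{-i\langle h,\cdot\rangle/2}$ is continuous from $H$ into $L^{2}(W,\Gamma)$ (using that $\langle h,\cdot\rangle$ is defined on all of $H$ as an element of $L^{2}(W,\Gamma)$). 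Since $\mathcal{F}$ is unitary, the identity for $W^{\ast}$ extends by continuity to all of $H$. This density extension is the only step beyond routine substitution, and it is straightforward rather than a real obstacle.
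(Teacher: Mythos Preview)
Your argument is correct and is exactly the approach indicated by the paper: the corollary carries no separate proof there, only the phrase ``by taking $f\equiv 1$ in Proposition~\ref{p.6.5}'', and you have simply unpacked that substitution. You have in fact been more careful than the paper in one respect: you noticed that Proposition~\ref{p.6.5} is stated for $h\in W^{\ast}$ while the corollary is stated for $h\in H$, and you supplied the density/continuity argument bridging the two, a point the paper passes over in silence.
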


We now work on the measure space $\left(  W\left(  \mathfrak{g}\right) ,
\mathcal{B}_{W\left(  \mathfrak{g}\right)  }, \nu\right) $ and let
$w_{s}:W\left(  \mathfrak{g}\right)  \rightarrow\mathfrak{g}$ be the
projection map, $w_{s}\left(  \omega\right)  =\omega_{s}$ for all $0\leqslant
s\leqslant T$ and $\omega\in W\left(  \mathfrak{g}\right)  . $ [Note, we may
also view $w$ as the identity map from $W\left(  \mathfrak{g}\right)  $ to
$W\left(  \mathfrak{g}\right)  . ]$ The energy representation is a unitary
representation of $H\left(  G\right)  $ on the space $L^{2}\left(  W\left(
\mathfrak{g}\right)  , \nu\right)  $. First we introduce an operator on
$W\left(  \mathfrak{g}\right)  $ used to define the energy representation.
Note that since the inner product on $\mathfrak{g}$ is $\operatorname{Ad}%
$-invariant, the operator $O_{\varphi}$ defined by%

\begin{equation}
O_{\varphi}\left(  w\right)  : =\int_{0}^{\cdot}\operatorname{Ad}_{\varphi
}\delta w_{s},w\in W\left(  \mathfrak{g}\right)  , \varphi\in H\left(
G\right)  \label{e.7.1}%
\end{equation}
is well-defined on $W\left(  \mathfrak{g}\right) $ by L\'{e}vy's criterion as
we indicated in \eqref{e.2.15}. Moreover, since the It\^{o} and Stratonovich
integrals of deterministic integrands are equal, we see that%

\[
O_{\varphi}\left(  w\right)  =\int_{0}^{\cdot}\operatorname{Ad}_{\varphi
}\delta w_{s}=\int_{0}^{\cdot}\operatorname{Ad}_{\varphi}dw_{s}.
\]

\begin{df}
\label{d.6.1} For any $\varphi\in H\left(  G \right)  $%

\[
\left(  E_{\varphi}f\right)  \left(  w\right)  : =e^{i\int_{0}^{T}%
\langle\varphi^{-1}\varphi^{\prime}\left(  s\right) , dw_{s}\rangle}f\left(
O_{\varphi^{-1}}w\right)  .
\]
for any $f\in L^{2}\left(  W\left(  \mathfrak{g}\right)  , \nu\right)  $. Then
$E_{\varphi}$ is called the \emph{energy representation} of $H\left(  G\right)
$.
\end{df}

Again using the fact that the It\^{o} and Stratonovich integrals are equal for
deterministic integrands, we see that
\[
\left(  E_{\varphi}f\right)  \left(  w\right)  =e^{i\int_{0}^{T}\langle
\varphi^{-1}\varphi^{\prime}\left(  s\right) , dw_{s}\rangle}f\left(
O_{\varphi^{-1}}w\right) .
\]
It is easy to see that $E_{\varphi}^{\ast}=E_{\varphi^{-1}}$, so it is a
unitary representation of $H\left(  G\right)  $ on $L^{2}\left(  W\left(
\mathfrak{g}\right) , \nu\right) $. For our future results using It\^{o}
integrals will be more convenient, so this is what we will be using from now
on mostly.

\begin{thm}
\label{t.uniequiv}Both $U^{R}$ and $U^{L}$ are unitarily equivalent to the
energy representation $E$.
\end{thm}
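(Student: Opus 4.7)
Here is my plan: the equivalence $U^{R}\cong E$ is going to be built as the composition of two natural unitaries --- first the It\^o isomorphism $B^{L}$ from Proposition~\ref{p.3.3}, which transports $L^{2}(W(G),\mu)$ onto $L^{2}(W(\mathfrak g),\nu)$ and converts the Brownian measure representation into a \emph{Euclidean} Cameron--Martin regular representation on the abstract Wiener space $(H(\mathfrak g),W(\mathfrak g),\nu)$ of the type in \eqref{e.C.1}, and second the Fourier--Wiener transform $\mathcal F$, which by Proposition~\ref{p.6.5} converts such Euclidean representations into energy-type representations. For $U^{L}$ the same argument works with $B^{R}$ in place of $B^{L}$ and parts (5)--(6) of Theorem~\ref{t.g-valued}.

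\textbf{Step 1.} Let $V:L^{2}(W(\mathfrak g),\nu)\to L^{2}(W(G),\mu)$, $Vf:=f\circ B^{L}$; this is a unitary by Proposition~\ref{p.3.3}. Combining formula (4) of Theorem~\ref{t.g-valued} with the definition of $Z^{R}_{\varphi}$ from Theorem~\ref{t.quasi}, a direct It\^o-integral computation gives that $\widetilde U^{R}_{\varphi}:=V^{-1}U^{R}_{\varphi}V$ acts as
\[
(\widetilde U^{R}_{\varphi}f)(w)=\exp\!\Bigl(-\tfrac12\!\int_{0}^{T}\!\langle\varphi'\varphi^{-1},dw\rangle-\tfrac14\!\int_{0}^{T}\!|\varphi'\varphi^{-1}|^{2}ds\Bigr)\,f\bigl(O_{\varphi^{-1}}w+h_{\varphi}\bigr),
\]
where $h_{\varphi}(t):=\int_{0}^{t}\varphi^{-1}\varphi'\,ds\in H(\mathfrak g)$ and $O_{\varphi^{-1}}$ is the operator from \eqref{e.7.1}. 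Because the $\operatorname{Ad}$-invariance of the inner product on $\mathfrak g$ makes $O_{\varphi}$ an element of $\operatorname{O}(W^{*}(\mathfrak g))$, this is exactly of the Euclidean form $U_{R,h}$ of \eqref{e.C.1}.

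\textbf{Step 2.} Conjugating by the Fourier--Wiener transform $\mathcal F$ and applying Proposition~\ref{p.6.5} turns the square-root Cameron--Martin density into a pure phase and replaces the argument $O_{\varphi^{-1}}w+h_{\varphi}$ by the rotation $O_{\varphi^{-1}}w$. After using the $\operatorname{Ad}$-invariance identity $O_{\varphi^{-1}}\!\int_{0}^{\cdot}\!\varphi'\varphi^{-1}\,ds=\int_{0}^{\cdot}\!\varphi^{-1}\varphi'\,ds=h_{\varphi}$ to rewrite the resulting phase factor, one obtains
\[
(\mathcal F\,\widetilde U^{R}_{\varphi}\,\mathcal F^{-1})f(w)=e^{\,i\int_{0}^{T}\langle\varphi^{-1}\varphi',\,dw\rangle}\,f(O_{\varphi^{-1}}w)=(E_{\varphi}f)(w),
\]
so $V\circ\mathcal F^{-1}$ is a unitary intertwiner between $E$ and $U^{R}$. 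For $U^{L}$ either the same argument with $B^{R}$ and parts (5)--(6) of Theorem~\ref{t.g-valued} works, or one uses the equivalence $U^{L}=JU^{R}J$ from Theorem~\ref{t.7.1}(1), together with $B^{L}\circ\Theta=-B^{R}$ from Proposition~\ref{p.3.3} and the symmetry $\mathcal F^{2}f(w)=f(-w)$, to transport the equivalence across the involution $J$.

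The main obstacle I expect is the bookkeeping of constants and signs in Step~2: one must verify that the scaling of $\mathcal F$ (the parameter $r=1/2$ mentioned after Proposition~\ref{p.6.4}) produces precisely the phase $e^{i\int\langle\varphi^{-1}\varphi',dw\rangle}$ of $E_{\varphi}$ without extraneous numerical factors, and one must legitimize $O_{\varphi}$ as an honest element of $\operatorname{O}(W^{*}(\mathfrak g))$ so that Proposition~\ref{p.6.5} is directly applicable --- both of which rest on smoothness of $\varphi$ and on $\operatorname{Ad}_{G}$-invariance of $\langle\cdot,\cdot\rangle_{\mathfrak g}$.
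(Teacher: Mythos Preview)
Your proposal is correct and follows essentially the same route as the paper: transport $U^{R}$ to $L^{2}(W(\mathfrak g),\nu)$ via the It\^o isomorphism $B^{L}$ (the paper's \eqref{e.8.12}--\eqref{e.8.13}), identify the result with a Euclidean representation $U_{R,h}$ of the form \eqref{e.C.1} with $R^{\ast}=O_{\varphi^{-1}}$ and $h=-\int_{0}^{\cdot}\varphi^{-1}\varphi'$, and then conjugate by the Fourier--Wiener transform using Proposition~\ref{p.6.5}. The paper handles $U^{L}$ exactly by invoking the $J$-equivalence of Theorem~\ref{t.7.1}(1), which is one of the two options you list; your anticipated obstacles (the $r=1/2$ scaling and the verification that $O_{\varphi}\in\operatorname{O}(W^{\ast})$) are precisely the points the paper glosses over with a reference to \cite{DriverHall1999a} and the remark following \eqref{e.2.15}.
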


\begin{proof}
As we noted in Theorem \ref{t.7.1}, $U^{R}$ and $U^{L}$ are unitarily
equivalent. Using \eqref{e.4.2} we see that under the inverse It\^{o} map
$B^{L}$ the left multiplication is mapped to the following operator%

\begin{equation}
\left(  \left( B^{L}\right)^{\ast} R_{\varphi}^{\ast}\right)  f\left(  w\right)
=f\left(  O_{\varphi^{-1}}w+\int_{0}^{\cdot}\varphi^{-1}d\varphi\right)  ,
\label{e.8.12}%
\end{equation}
where $f\in L^{2}\left(  W\left(  \mathfrak{g}\right), \nu\right)$, $w\in
W\left(  \mathfrak{g}\right)$, and $R_{\varphi}^{\ast}$ is the adjoint operator.

Then the representation $U^{R}_{\varphi}$ corresponds to the following
representation on $L^{2}\left(  W\left(  \mathfrak{g}\right)  , \nu\right)  $%

\begin{align}
\left(  u_{\varphi}^{R}f\right)  \left(  w\right)   &  : =\left(
\left( B^{L}\right)^{\ast} U_{\varphi}^{R}f\right)  \left(  w\right)
\label{e.8.13}\\
&  =e^{\frac{1}{2}\int_{0}^{T}\langle\varphi^{-1}\varphi^{\prime}\left(
s\right), dw_{s}\rangle-\frac{1}{4}\Vert\varphi\Vert_{H,\cdot}^{2}}f\left(
O_{\varphi^{-1}}w+\int_{0}^{\cdot}\varphi^{-1}d\varphi\right)  .\nonumber
\end{align}
Here we used $O_{\varphi^{-1}}$ to denote the operator introduced by
\eqref{e.7.1}. Note that $\left(  u_{\varphi}^{R}f\right)  \left(  w\right)
=U_{R,h}$, where $U_{R,h}$ is defined by \eqref{e.C.1} with $R^{\ast}\left(
w\right)  =O_{\varphi^{-1}}w$ and $h=-\varphi^{-1}d\varphi$. The adjoint
representation of $G$ on $\mathfrak{g}$ is unitary, and therefore
$O_{\varphi^{-1}}$ is a continuous unitary transformation on $H\left(
\mathfrak{g}\right) $. Thus we can apply Proposition \ref{p.6.5} to see that
$u_{\varphi}^{R}$ is unitarily equivalent to $E_{\varphi}$. The intertwining
operator here is the Fourier-Wiener transform $\mathcal{F}$, and the
intertwining map between $U^{L}$ and $E$ is then $\mathcal{F}\circ \left( B^{L}\right)^{\ast}$.
\end{proof}

\begin{cor}
\label{c.6.4} Theorem \ref{t.4.11} implies that $1$ is a cyclic vector for the
energy representation.
\end{cor}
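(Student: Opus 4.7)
The strategy is essentially to transport the cyclicity statement of Theorem~\ref{t.4.11} across the explicit unitary equivalence constructed in Theorem~\ref{t.uniequiv}. Concretely, Theorem~\ref{t.4.11} says exactly that $\mathbbm{1}$ is cyclic for the right Brownian representation $U^{R}$ on $L^{2}(W(G),\mu)$. Theorem~\ref{t.uniequiv} exhibits the unitary intertwiner $T:=\mathcal{F}\circ(B^{L})^{\ast}:L^{2}(W(G),\mu)\to L^{2}(W(\mathfrak{g}),\nu)$ satisfying $T\,U^{R}_{\varphi}=E_{\varphi}\,T$ for every $\varphi\in H(G)$.

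The plan is then two short steps. First I would note that under any unitary intertwiner, cyclic vectors go to cyclic vectors: if $\mathrm{Span}\{U^{R}_{\varphi}\mathbbm{1}\}$ is dense in $L^{2}(W(G),\mu)$, then applying $T$ gives that $\mathrm{Span}\{E_{\varphi}(T\mathbbm{1})\}$ is dense in $L^{2}(W(\mathfrak{g}),\nu)$. So it suffices to identify $T\mathbbm{1}$ and check it equals the constant function $\mathbbm{1}$.

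Second, I would compute $T\mathbbm{1}=\mathcal{F}(B^{L})^{\ast}\mathbbm{1}$. The pullback $(B^{L})^{\ast}$ acts on functions by $f\mapsto f\circ B^{L}$ (Notation~\ref{n.4.3}), so it fixes constants: $(B^{L})^{\ast}\mathbbm{1}=\mathbbm{1}$. For the Fourier--Wiener transform applied to the constant $1$, one may either use the integral representation given just before Proposition~\ref{p.6.4}, which gives $(\mathcal{F}\mathbbm{1})(w)=\int_{W(\mathfrak{g})}1\,d\nu(u)=1$, or simply specialize Corollary~\ref{c.A.6} to $h=0$, which yields $\mathcal{F}\mathbbm{1}=\mathbbm{1}$. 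Either way $T\mathbbm{1}=\mathbbm{1}$, completing the argument.

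There is essentially no obstacle here beyond making sure the intertwiner from Theorem~\ref{t.uniequiv} really is the composition $\mathcal{F}\circ(B^{L})^{\ast}$ and not its inverse (which would reverse the cyclicity transfer in a harmless way) and that $\mathcal{F}$ is normalized so that it fixes constants. Both are already built into the formulation adopted in this paper via the parameter $r=1/2$ in Definition~17 of \cite{DriverHall1999a} and the identity $\mathcal{F}^{4}=I$ noted there; this is precisely why the almost trivial computation $\mathcal{F}\mathbbm{1}=\mathbbm{1}$ works and the corollary reduces to a one-line consequence.
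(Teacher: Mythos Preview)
Your proposal is correct and matches the paper's approach: the corollary is stated without proof in the paper, precisely because it is meant to be the immediate transport of Theorem~\ref{t.4.11} along the unitary equivalence of Theorem~\ref{t.uniequiv}, which is exactly the argument you spell out. The only minor wrinkle is that the paper's last line in the proof of Theorem~\ref{t.uniequiv} names the intertwiner $\mathcal{F}\circ(B^{L})^{\ast}$ as going between $U^{L}$ and $E$ rather than $U^{R}$ and $E$; this is harmless for you since $\mathbbm{1}$ is cyclic for both (Theorem~\ref{t.7.1}(2)) and $J\mathbbm{1}=\mathbbm{1}$, so your computation $T\mathbbm{1}=\mathbbm{1}$ goes through either way.
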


\bibliographystyle{amsplain}

\begin{thebibliography}{10}

\bibitem{AlbeverioHKTestardVershik1983a}
S.~Albeverio, R.~H{\o}egh-Krohn, D.~Testard, and A.~Vershik, \emph{Factorial
  representations of path groups}, J. Funct. Anal. \textbf{51} (1983), no.~1,
  115--131. \MR{699230 (85m:22014a)}

\bibitem{AlbeverioHKTestardVershik1983b}
\bysame, \emph{Note: ``{F}actorial representations of path groups''}, J. Funct.
  Anal. \textbf{52} (1983), no.~1, 148. \MR{705996 (85m:22014b)}

\bibitem{AlbeverioHK1978}
Sergio Albeverio and Raphael H{\o}egh-Krohn, \emph{The energy representation of
  {S}obolev-{L}ie groups}, Compositio Math. \textbf{36} (1978), no.~1, 37--51.
  \MR{MR515036 (80i:58017)}

\bibitem{AHKMTTBook}
Sergio~A. Albeverio, Raphael~J. H{\o}egh-Krohn, Jean~A. Marion, Daniel~H.
  Testard, and Bruno~S. Torr{\'e}sani, \emph{Noncommutative distributions},
  Monographs and Textbooks in Pure and Applied Mathematics, vol. 175, Marcel
  Dekker Inc., New York, 1993.

\bibitem{BogachevBook}
Vladimir~I. Bogachev, \emph{Gaussian measures}, Mathematical Surveys and
  Monographs, vol.~62, American Mathematical Society, Providence, RI, 1998.
  \MR{MR1642391 (2000a:60004)}

\bibitem{Driver1992b}
Bruce~K. Driver, \emph{A {C}ameron-{M}artin type quasi-invariance theorem for
  {B}rownian motion on a compact {R}iemannian manifold}, J. Funct. Anal.
  \textbf{110} (1992), no.~2, 272--376.

\bibitem{Driver1994b}
\bysame, \emph{A {C}ameron-{M}artin type quasi-invariance theorem for pinned
  {B}rownian motion on a compact {R}iemannian manifold}, Trans. Amer. Math.
  Soc. \textbf{342} (1994), no.~1, 375--395.

\bibitem{Driver1995a}
\bysame, \emph{Towards calculus and geometry on path spaces}, Stochastic
  analysis (Ithaca, NY, 1993), Proc. Sympos. Pure Math., vol.~57, Amer. Math.
  Soc., Providence, RI, 1995, pp.~405--422.

\bibitem{DriverHall1999a}
Bruce~K. Driver and Brian~C. Hall, \emph{The energy representation has no
  non-zero fixed vectors}, Stochastic processes, physics and geometry: new
  interplays, {II} ({L}eipzig, 1999), CMS Conf. Proc., vol.~29, Amer. Math.
  Soc., Providence, RI, 2000, pp.~143--155. \MR{MR1803410 (2002f:22034)}

\bibitem{FollandHABook}
Gerald~B. Folland, \emph{A course in abstract harmonic analysis}, Studies in
  Advanced Mathematics, CRC Press, Boca Raton, FL, 1995. \MR{MR1397028
  (98c:43001)}

\bibitem{GGV1977}
I.~M. Gel{\cprime}fand, M.~I. Graev, and A.~M. Ver{\v{s}}ik,
  \emph{Representations of the group of smooth mappings of a manifold {$X$}\
  into a compact {L}ie group}, Compositio Math. \textbf{35} (1977), no.~3,
  299--334. \MR{MR0578652 (58 \#28257)}

\bibitem{GGV1980-81}
\bysame, \emph{Representations of the group of functions taking values in a
  compact {L}ie group}, Compositio Math. \textbf{42} (1980/81), no.~2,
  217--243. \MR{MR596877 (83g:22002)}

\bibitem{Gordina2002}
Maria Gordina, \emph{{T}aylor map on groups associated with a
  $\operatorname{II}_{1}$-factor}, Infin. Dimens. Anal. Quantum Probab. Relat.
  Top. \textbf{5} (2002), no.~1, 93--111.

\bibitem{Gordina2003a}
\bysame, \emph{Stochastic differential equations on noncommutative {$L\sp 2$}},
  Finite and infinite dimensional analysis in honor of Leonard Gross (New
  Orleans, LA, 2001), Contemp. Math., vol. 317, Amer. Math. Soc., Providence,
  RI, 2003, pp.~87--98. \MR{MR1966889 (2004e:46078)}

\bibitem{Gross1993a}
Leonard Gross, \emph{Uniqueness of ground states for {S}chr\"odinger operators
  over loop groups}, J. Funct. Anal. \textbf{112} (1993), no.~2, 373--441.
  \MR{MR1213144 (94h:58180)}

\bibitem{GuichardetBook}
Alain Guichardet, \emph{Symmetric {H}ilbert spaces and related topics}, Lecture
  Notes in Mathematics, Vol. 261, Springer-Verlag, Berlin, 1972, Infinitely
  divisible positive definite functions. Continuous products and tensor
  products. Gaussian and Poissonian stochastic processes. \MR{MR0493402 (58
  \#12416)}

\bibitem{HallSengupta1998}
Brian~C. Hall and Ambar~N. Sengupta, \emph{The {S}egal-{B}argmann transform for
  path-groups}, J. Funct. Anal. \textbf{152} (1998), no.~1, 220--254.

\bibitem{HidaBook}
Takeyuki Hida, \emph{Brownian motion}, Applications of Mathematics, vol.~11,
  Springer-Verlag, New York, 1980, Translated from the Japanese by the author
  and T. P. Speed. \MR{MR562914 (81a:60089)}

\bibitem{KadisonRingrose2}
Richard~V. Kadison and John~R. Ringrose, \emph{Fundamentals of the theory of
  operator algebras. {V}ol. {II}}, Pure and Applied Mathematics, vol. 100,
  Academic Press Inc., Orlando, FL, 1986, Advanced theory. \MR{MR859186
  (88d:46106)}

\bibitem{MalliavinMalliavin1990a}
Marie-Paule Malliavin and Paul Malliavin, \emph{Integration on loop groups.
  {I}. {Q}uasi invariant measures}, J. Funct. Anal. \textbf{93} (1990), no.~1,
  207--237. \MR{1070039 (91k:60084)}

\bibitem{Marion1989}
Jean Marion, \emph{Wiener functionals on spaces of {L}ie algebra valued
  {$1$}-currents, and unitary representations of current groups}, Publ. Mat.
  \textbf{33} (1989), no.~1, 99--121. \MR{1004228 (90i:22035)}

\bibitem{RobinsonBook1991}
Derek~W. Robinson, \emph{Elliptic operators and {L}ie groups}, Oxford
  Mathematical Monographs, The Clarendon Press, Oxford University Press, New
  York, 1991, Oxford Science Publications. \MR{1144020 (92m:58133)}

\bibitem{Segal1950a}
I.~E. Segal, \emph{The two-sided regular representation of a unimodular locally
  compact group}, Ann. of Math. (2) \textbf{51} (1950), 293--298. \MR{MR0036764
  (12,157e)}

\bibitem{Shigekawa1984a}
Ichir{\=o} Shigekawa, \emph{Transformations of the {B}rownian motion on a
  {R}iemannian symmetric space}, Z. Wahrsch. Verw. Gebiete \textbf{65} (1984),
  no.~4, 493--522. \MR{MR736143 (85k:58088)}

\bibitem{StroockVaradhanBook2006}
Daniel~W. Stroock and S.~R.~Srinivasa Varadhan, \emph{Multidimensional
  diffusion processes}, Classics in Mathematics, Springer-Verlag, Berlin, 2006,
  Reprint of the 1997 edition. \MR{2190038 (2006f:60005)}

\bibitem{SunderBook1987}
V.~S. Sunder, \emph{An invitation to von {N}eumann algebras}, Universitext,
  Springer-Verlag, New York, 1987. \MR{866671 (88f:46115)}

\bibitem{Testard1987}
D.~Testard, \emph{Representations of the group of equivariant loops in {${\rm
  SU}(N)$}}, Stochastic processes---mathematics and physics, {II} ({B}ielefeld,
  1985), Lecture Notes in Math., vol. 1250, Springer, Berlin, 1987,
  pp.~326--341. \MR{897815 (88h:22035)}

\bibitem{Wallach1987}
Nolan~R. Wallach, \emph{On the irreducibility and inequivalence of unitary
  representations of gauge groups}, Compositio Math. \textbf{64} (1987), no.~1,
  3--29. \MR{MR911356 (89c:22037)}

\end{thebibliography}
\def\cprime{$'$}
\providecommand{\bysame}{\leavevmode\hbox to3em{\hrulefill}\thinspace}
\providecommand{\MR}{\relax\ifhmode\unskip\space\fi MR }
% \MRhref is called by the amsart/book/proc definition of \MR.
\providecommand{\MRhref}[2]{%
  \href{http://www.ams.org/mathscinet-getitem?mr=#1}{#2}
}
\providecommand{\href}[2]{#2}

\end{document}